\newtheorem{theorem}{Theorem}[section]
\newtheorem{lemma}[theorem]{Lemma}
\newtheorem{proposition}[theorem]{Proposition}
\theoremstyle{definition}
\newtheorem*{acknowledgments}{Acknowledgments}
\newcommand{\Tr}{\mathrm{Tr}}
\newcommand{\Gal}{\mathrm{Gal}}
\newcommand{\modulo}[1]{\textup{ (mod }#1)}
\newcommand{\bF}{\mathbb F}
\newcommand{\bN}{\mathbb N}
\newcommand{\bZ}{\mathbb Z}
\newcommand{\bQ}{\mathbb Q}
\newcommand{\gen}[1]{\langle#1\rangle}
\begin{document}
\title[Non-Abelian Extensions]{Non-Abelian extensions of degree $p^3$ and $p^4$ in characteristic $p>2$}
\author{Grant Moles}
\date{\today}

\bibliographystyle{amsalpha}
\begin{abstract}
  This paper describes in terms of Artin-Schreier equations field extensions whose Galois group is isomorphic to any of the four non-cyclic groups of order $p^3$ or the ten non-Abelian groups of order $p^4$, $p$ an odd prime, over a field of characteristic $p$.
\end{abstract}

\maketitle

\newpage

\section{Introduction}

Let $K$ be a field of characteristic $p$, a prime, and $G$ be a $p$-group. The main theorem of \cite{witt} tells us under what circumstances there will exist a field extension $L$ of $K$ such that $L/K$ is Galois with Galois group $G$. Furthermore, in \cite{saltman}, it was shown that such a field extension can be described using polynomials of a particular form, as we will describe shortly (in fact, Saltman showed this more generally for ring extensions). The purpose of this paper is to serve as a natural extension of these important results. For field extensions of this form of degree $p^3$ or $p^4$, we will construct the polynomials based on the presentation of the Galois group $G$.

Let $K_4/K_0$ be a non-Abelian Galois extension of degree $p^4$, where $K_0$ is a field of characteristic $p$, an odd prime. According to Burnside \cite{burnside}, there are ten non-Abelian groups of order $p^4$. Nine of these groups can be expressed as: 
\begin{alignat*}{1}
G_{\vec{a},\vec{b}}=\langle \sigma_1&,\sigma_2,\sigma_3,\sigma_4:\sigma_1^p=\sigma_3^{a_1}\sigma_4^{b_1},\sigma_2^p=\sigma_4^{b_2},\sigma_3^p=\sigma_4^p=1,\sigma_4\in Z(G_{\vec{a},\vec{b}}), \\&\sigma_3\sigma_2=\sigma_2\sigma_3,
\sigma_3\sigma_1=\sigma_1\sigma_3\sigma_4^{b_3},\sigma_2\sigma_1=\sigma_1\sigma_2\sigma_3^{a_0}\sigma_4^{b_0}\rangle,
\end{alignat*}
where $a_i\in\{0,1\}$, $b_j\in\{0,1\}$ for $j\neq2$, and $b_2\in\{0,1,\alpha\}$, where $\alpha$ is a quadratic non-residue mod $p$. The tenth group is expressed as $$H=\langle \sigma_1,\sigma_2,\sigma_3,\sigma_4: \sigma_3=\sigma_1^p,\sigma_4=\sigma_1^{p^2},\sigma_1^{p^3}=\sigma_2^p=1, \sigma_2\sigma_1=\sigma_1^{1+p^2}\sigma_2\rangle.$$
In any of these groups, we can construct the composition series $$\{1\}\trianglelefteq\gen{\sigma_4}\trianglelefteq\gen{\sigma_3,\sigma_4}\trianglelefteq\gen{\sigma_2,\sigma_3,\sigma_4}\trianglelefteq\gen{\sigma_1,\sigma_2,\sigma_3,\sigma_4},$$ with $\gen{\sigma_1,\sigma_2,\sigma_3,\sigma_4}$ being the entire group. The reader will note that this is a central series in any of the given groups, so we can define intermediate fields $K_3,K_2,K_1$ such that $K_3$ is the fixed field of $\langle\sigma_4\rangle$, $K_2$ is the fixed field of $\langle\sigma_3,\sigma_4\rangle$, and $K_1$ is the fixed field of $\langle\sigma_2,\sigma_3,\sigma_4\rangle$. In general, we have $\Gal(K_i/K_{i-1})=\langle \sigma_i|_{K_i}\rangle$, the subgroup of automorphisms of $K_i$ generated by $\sigma_i$ restricted to $K_i$. When the Galois group of $K_4/K_0$ is $G_{\vec{a},\vec{b}}$ as above, we have that $K_3/K_0$ is a non-cyclic Galois extension of degree $p^3$ with Galois group $$G_{\vec{a}}=\gen{\sigma_1,\sigma_2,\sigma_3:\sigma_1^p=\sigma_3^{a_1},\sigma_2^p=\sigma_3^p=1,\sigma_3\in Z(G_{\vec{a}}),\sigma_2\sigma_1=\sigma_1\sigma_2\sigma_3^{a_0}},$$ with $\vec{a}$ the same vector as in the representation of $G_{\vec{a},\vec{b}}$. In fact, this form also holds when the Galois group is $H$, with $a_0=0$, $a_1=1$. Now since each extension $K_i/K_{i-1}$ is cyclic of degree $p$ with Galois group $\gen{\sigma_i|_{K_i}}$, there is some $x_i\in K_i$ such that $K_i=K_{i-1}(x_i)$ which satisfies an Artin-Schreier polynomial with constant coefficient in $K_{i-1}$ such that $(\sigma_i-1)x_i=1$. Note that since $\sigma_i$ fixes $K_j$ for $0\leq j<i\leq 4$,  $(\sigma_i-1)x_j=0$ for $1\leq j<i\leq 4$. Furthermore, we have that $\Gal(K_2/K_0)=\gen{\sigma_1|_{K_2},\sigma_2|_{K_2}}\cong C_p^2$, so we may choose $x_2\in K_2$ to be such that $(\sigma_1-1)x_2=0$. Then we arrive at the main theorem of this paper:

\begin{theorem}
Let $K_4/K_0$ be a non-Abelian Galois extension of degree $p^4$ with $\sigma_i$, $K_i$, and $x_i$ for $1\leq i\leq 4$ as defined above. Then $x_1$ and $x_2$ can be chosen so that $\wp(x_1)\in K_0$ and $\wp(x_2)\in K_0$, where $\wp$ is the Weierstrass $\wp$ function, defined by $\wp(x)=x^p-x$. Denote $\wp(x_1)=\beta_1$ and $\wp(x_2)=\beta_2$. If $\Gal(K_4/K_0)\cong G_{\vec{a},\vec{b}}$ as above, then we can select $x_3$ and $x_4$ such that
\begin{align*}
\wp(x_3)&=a_0\beta_2x_1+a_1D_1(x_1)+\beta_3,\\
\wp(x_4)&=b_0\beta_2x_1+b_1D_1(x_1)+b_2D_1(x_2)+b_3\left(a_0\beta_2\binom{x_1}{2}+\beta_3x_1\right)+\beta_4,
\end{align*} for some $\beta_i\in K_0$, where $D_1\in\bF_p[x]$ the Witt polynomial $$D_1(x)=\frac{x^p+(x^p-x)^p-(x+(x^p-x))^p}{p}.$$ For each group, the specific values of $a_i$ and $b_i$ are shown in Table 1. Otherwise, $\Gal(K_4/K_0)\cong H$ as above, and we can select $x_3$ and $x_4$ such that
\begin{align*}
\wp(x_3)&=D_1(x_1)+\beta_3,\\
\wp(x_4)&=\beta_2 x_1+D_2(x_1,x_3)+\beta_4.
\end{align*}
Again, $\beta_i\in K_0$, $D_1$ is as above, and $D_2(x_1,x_3)\in\bF_p[x_1,x_3]$ is the Witt polynomial
$$D_2(x_1,x_3)=\frac{x_1^{p^2}+\beta_1^{p^2}-(x_1+\beta_1)^{p^2}+p\left(x_3^p+\beta_3^p-(x_3+D_1(x_1)+\beta_3)^p\right)}{p^2}$$
Moreover, the converse holds; that is, any algebraic extension of such a form (the particular form described in more detail later) is a Galois extension with Galois group as presented here.
\end{theorem}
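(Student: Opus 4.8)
The plan is to split the proof into the forward implication --- that every such $K_4/K_0$ admits generators $x_i$ of the displayed form --- and the converse. Three elementary facts are used throughout: $\wp$ is additive, so $(\sigma-1)\wp(y)=\wp\bigl((\sigma-1)y\bigr)$ for every $\sigma\in\Gal(K_4/K_0)$; $\wp(y)=0$ forces $y\in\bF_p$; and in a cyclic degree-$p$ extension $L/F$ with $\Gal(L/F)=\gen{\sigma}$, an element $y$ with $(\sigma-1)y\in F$ is determined modulo $F$ by the value $(\sigma-1)y$, while replacing $y$ by $y+w$ for $w\in F$ alters $\wp(y)$ by $\wp(w)$ without changing $(\sigma-1)y$. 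Since $K_3/K_0$ already has Galois group $G_{\vec a}$, I would first produce $x_1,x_2,x_3$ inside $K_3$ and only then bootstrap to $x_4$. For $x_1$, Artin--Schreier theory for $K_1/K_0$ gives $x_1$ with $\wp(x_1)=\beta_1\in K_0$, and rescaling by an element of $\bF_p^\times$ (using $\wp(cx)=c\,\wp(x)$ for $c\in\bF_p$) arranges $(\sigma_1-1)x_1=1$. Because $\Gal(K_2/K_0)\cong C_p^2$ in all ten groups, the fixed field $K_2^{\gen{\sigma_1}}$ is a cyclic degree-$p$ extension of $K_0$; taking an Artin--Schreier generator $x_2$ of it and rescaling yields $\wp(x_2)=\beta_2\in K_0$, $(\sigma_2-1)x_2=1$, $(\sigma_1-1)x_2=0$, and $K_2=K_0(x_1,x_2)$.

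Next, choose any Artin--Schreier generator $x_3$ of $K_3/K_2$ with $(\sigma_3-1)x_3=1$ and set $\theta=(\sigma_1-1)x_3$, $\eta=(\sigma_2-1)x_3$. On $K_3$ the central element $\sigma_4$ acts trivially, so the defining relations force $\sigma_3$ to commute with $\sigma_1$ and $\sigma_2$ there; hence $(\sigma_3-1)\theta=(\sigma_3-1)\eta=0$, i.e.\ $\theta,\eta\in K_2$. The relation $\sigma_1^p=\sigma_3^{a_1}\sigma_4^{b_1}$ gives $\sum_{k=0}^{p-1}\sigma_1^k\theta=a_1$, the relation $\sigma_2^p=\sigma_4^{b_2}$ gives $\sum_{k=0}^{p-1}\sigma_2^k\eta=0$, and $\sigma_2\sigma_1=\sigma_1\sigma_2\sigma_3^{a_0}\sigma_4^{b_0}$ gives $(\sigma_2-1)\theta=(\sigma_1-1)\eta+a_0$. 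This small $\bF_p$-linear system, together with $(\sigma_1-1)\wp(x_3)=\wp(\theta)$ and $(\sigma_2-1)\wp(x_3)=\wp(\eta)$, pins down $\wp(x_3)$ modulo $\wp(K_2)$ and modulo $K_0$. One then verifies that $a_0\beta_2x_1+a_1D_1(x_1)$ has the same images under $\sigma_1-1$ and $\sigma_2-1$: it is fixed by $\sigma_2$, $(\sigma_1-1)(a_0\beta_2x_1)=a_0\beta_2$, and $(\sigma_1-1)D_1(x_1)=D_1(x_1+1)-D_1(x_1)$ is, by the length-$2$ Witt addition formula, equal to $\wp$ of an explicit polynomial in $x_1$ whose sum along an orbit of $\sigma_1$ equals $1$; these are exactly the identities encoded in the definition of $D_1$. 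Absorbing the leftover $\wp(K_2)$-term into a new choice $x_3+(\text{element of }K_2)$ and the leftover $K_0$-term into $\beta_3$ then produces $\wp(x_3)=a_0\beta_2x_1+a_1D_1(x_1)+\beta_3$.

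The passage from $x_3$ to $x_4$ repeats this one level higher: now $\theta_i=(\sigma_i-1)x_4$ lies in $K_3$ for $i=1,2,3$ because $\sigma_4$ is central, and each relation of $G_{\vec a,\vec b}$ involving $\sigma_4$ contributes one scalar or $\bF_p$-linear constraint on the $\theta_i$, which with $(\sigma_i-1)\wp(x_4)=\wp(\theta_i)$ determines $\wp(x_4)$ modulo $\wp(K_3)+K_0$. The genuinely new feature is that when $b_3=1$ the automorphisms $\sigma_1$ and $\sigma_3$ no longer commute on $K_4$, and matching the corresponding part of the cocycle forces the term $b_3\bigl(a_0\beta_2\binom{x_1}{2}+\beta_3x_1\bigr)$ via $\binom{x_1+1}{2}-\binom{x_1}{2}=x_1$ (valid since $p>2$) together with $(\sigma_3-1)x_1=0$. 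For $H$ the same scheme applies, except that $\sigma_1$ now has order $p^3$, so the relevant subextensions of $K_3$ and $K_4$ are the Artin--Schreier--Witt extensions of lengths $2$ and $3$ attached to the Witt vectors $(\beta_1,\beta_3)$ and $(\beta_1,\beta_3,\beta_4)$; the corrections delivered by those addition laws are precisely $D_1(x_1)$ and $D_2(x_1,x_3)$, and the relation $\sigma_2\sigma_1=\sigma_1^{1+p^2}\sigma_2$ contributes the extra summand $\beta_2x_1$ in $\wp(x_4)$.

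For the converse, start over an arbitrary $K_0$ with $\beta_i\in K_0$ satisfying the independence hypotheses stated below; these are precisely what keep each displayed right-hand side outside $\wp$ of the previous field, so the tower $K_0\subseteq K_1\subseteq\cdots\subseteq K_4$ has all degrees $p$ and $[K_4:K_0]=p^4$. Using the same $D_1$, $D_2$ identities one defines, for each $i$, the assignment $x_i\mapsto x_i+1$, $x_j\mapsto x_j$ for $j<i$, and $x_k\mapsto x_k$ plus the explicit element of $K_{k-1}$ extracted above for $k>i$, and checks directly that it respects every defining Artin--Schreier equation, hence extends to a $K_0$-automorphism $\sigma_i$ of $K_4$; re-running the forward computations shows the $\sigma_i$ satisfy the relations recorded in Table 1, and since they already generate a group of order $\geq p^4=[K_4:K_0]$, the extension is Galois with the stated group. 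The main obstacle is the bookkeeping in the forward direction: reading off $\theta$, $\eta$ and the $\theta_i$ from each presentation and recognizing the resulting $1$-cocycle as the coboundary of the asserted polynomial. The most delicate instances are $\wp(x_4)$ with $b_3=1$, where $\sigma_1$ and $\sigma_3$ fail to commute on $K_4$, and the $H$-case term $D_2(x_1,x_3)$, both of which hinge on careful use of the higher Witt addition laws; the remainder reduces to a finite list of polynomial identities for $D_1$ and $D_2$ and to $\bF_p$-linear algebra.
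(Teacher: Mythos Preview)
Your proposal is correct and follows essentially the same strategy as the paper: constrain $(\sigma_i-1)x_j$ using the group relations, invoke additive Hilbert~90 and the Witt identities for $D_1,D_2$, and read off $\wp(x_j)$ up to a constant in $K_0$. The only organizational difference is that the paper first normalizes the action---arranging $(\sigma_2-1)x_3=0$ and then, via Hilbert~90 and the trace identity $\Tr_{K_1/K_0}(C_1(x_1))=1$, forcing $(\sigma_1-1)x_3=a_0x_2+a_1C_1(x_1)$ exactly (and similarly one level up, where the paper also uses $a_1b_3=0$ for the groups in Table~1)---so that $\wp(x_3)$ and $\wp(x_4)$ fall out in a single line, whereas you match $\wp(x_j)$ against the candidate modulo $\wp(K_{j-1})+K_0$ and absorb the discrepancy afterward.
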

	
	\begin{center}
		\textit{Table 1: Translation to Burnside and James with $\vec{a}$ and $\vec{b}$ Values}
		\begin{tabular}{|c|c|c|c|c|c|c|c|c|c|c|c|}
			\hline
			\hfill Burnside & James & $\sigma_1$ & $\sigma_2$ & $\sigma_3$ & $\sigma_4$ & $a_0$ & $a_1$ & $b_0$ & $b_1$ & $b_2$ & $b_3$\\
			\hline
			(xiv)& $\phi_2(1^4)$ & $S$ & $R$ & $Q$ & $P$ & $0$ & $0$ & $1$ & $0$ & $0$ & $0$\\
			\hline
			(ix) & $\phi_2(211)a$ & $P$ & $R^{-1}$ & $Q$ & $P^p$ & $0$ & $0$ & 1 & 1 & 0 & 0\\
			\hline
			(vii)& $\phi_2(211)b$ & $R$ & $P$ & $Q$ & $P^p$ & 0 & 0 & 0 & 0 & 1 & 1\\
			\hline
			(xv),$p=3$& $\phi_3(1^4)$ & $R$ & $P^{-1}$ & $Q^{-1}$ & $P^p$ & 1 & 0 & 0 & 0 & $\alpha$ & 1\\
			\hline
			(xv),$p>3$& $\phi_3(1^4)$ & $S$ & $R$ & $Q$ & $P$ & 1 & 0 & 0 & 0 & 0 & 1\\
			\hline
			(x)& $\phi_2(211)$ & $P$ & $R$ & $Q^{-1}$ & $P^p$ & 1 & 0 & 0 & 1 & 0 & 0\\
			\hline
			(xi) & \makecell{$\phi_3(211)b_1,p=3$\\$\phi_3(211)a,p>3$} & $P$ & $R$ & $Q^{-1}$ & $P^p$ & 1 & 0 & 0 & 1 & 0 & 1\\
			\hline
			(xii) & \makecell{$\phi_3(211)b_\nu,p=3$\\$\phi_3(211)b_1,p>3$}& $P$ & $R$ & $Q^{-1}$ & $P^p$ & 1 & 0 & 0 & 1 & 1 & 1\\
			\hline
			(xiii) & \makecell{$\phi_3(211)a,p=3$\\$\phi_3(211)b_\nu,p>3$} & $P$ & $R$ & $Q^{-1}$ & $P^p$ & 1 & 0 & 0 & 1 & $\alpha$ & 1\\
			\hline
			(viii) & $\phi_2(22)$ & $P$ & $Q^{-1}$ & $P^p$ & $Q^{-p}$ & 1 & 1 & 0 & 0 & 1 & 0\\
			\hline
			(vi) & $\phi_2(31)$ & $P$ & $Q^{-1}$ & $P^p$ & $P^{p^2}$ & - & - & - & - & - & - \\
			\hline
		\end{tabular}
	\end{center}

	In Table 1, $\alpha$ refers to a quadratic non-residue mod $p$ (recall that when $p=3$, the only quadratic non-residue is $\alpha=-1$). The value for $\wp(x_3)$ is determined below in Proposition 2.2, and the value for $\wp(x_4)$ in Propositions 3.1 and 3.3. It is worth noting that not every combination of vectors $\vec{a}$ and $\vec{b}$ is considered here, though every non-Abelian group is represented. Notably, some other combinations of $\vec{a}$ and $\vec{b}$ will yield Abelian groups, and some will yield equivalent representations of the non-Abelian groups. For example, when both $\vec{a}$ and $\vec{b}$ are zero vectors, $G_{\vec{a},\vec{b}}\cong C_p^4$. Since the combinations presented cover all non-Abelian groups, only these combinations will be considered in this paper. Also shown in the table are the notations for the groups used by Burnside \cite{burnside} and James \cite{james}, as well as the translation from the group elements $P,Q,R,S$ used by Burnside to the $\sigma_i$ defined above. The order has been changed to place groups with similar corresponding Artin-Schreier equations near each other. For more information on the groups presented here, as well as other groups of order $p^n$ for $n\leq 6$, see \cite{james}.
	
	At this point, it should be noted that although the work presented here follows naturally from Saltman's work, the notation used will be that presented above, rather than that used in \cite{saltman}. In particular, the elements $x_i$ and $\beta_i$ in this paper are referred to as $\theta_i$ and $r_i$, respectively, by Saltman. In his notation, we are constructing the particular polynomials $s_i$ corresponding to each choice of Galois group $G$. It it worth noting that, in Theorem 1.1 above, we have $\beta_i=x_i^p-x_i$ for $i=1,2$, and so $\beta_3$ is actually equal to $\wp(x_3)$ plus a polynomial in $\bF_p[x_1,x_2]$ (and similarly for $\beta_4$), as we would expect.
	
	The work presented here has potential applications in group theory, namely the study of $p$-groups and related structures, as well as Galois theory. Perhaps most obviously, the methods used to determine group actions and Artin-Schreier equations throughout this paper could be extended to extensions defined by other Galois groups. Most immediately, this could be applied to Galois extensions of degree $p^n$ for $n\geq 5$. However, this could also extend to other subgroups of the Nottingham group, especially those which are non-Abelian (see \cite{camina}, \cite{bct}). The results in this paper could also be used to approach the local lifting problem for the groups outlined above. For more information on this problem, as well as a characterization of extensions with Galois group $D_4$ in characteristic 2, see \cite{weaver}. Finally, results such as those found here are of particular interest and utility when considering ramification indices of local field extensions with these Galois groups (see \cite{elder1}, \cite{elder2}, \cite{elder3}).

\section{Extensions of degree $p^3$}

Before presenting the arguments for extensions of degree $p^4$, it will help to consider those of degree $p^3$. As discussed, we will only consider non-cyclic groups of order $p^3$, of which there are four: $C_p^3$, $C_{p^2}\times C_p$, the Heisenberg group $H(p^3)$, and the Metacyclic group $M(p^3)$. These can all be expressed as
\begin{equation}
	G_{\vec{a}}=\langle \sigma_1,\sigma_2,\sigma_3: \sigma_1^p=\sigma_3^{a_1}, \sigma_2^p=\sigma_3^p=1, \sigma_3\in Z(G_{\vec{a}}), \sigma_2\sigma_1=\sigma_1\sigma_2\sigma_3^{a_0}\rangle,
\end{equation}
where $\vec{a}=(a_0,a_1)$ takes on the values shown in Table 2. 
\begin{center}
	\textit{Table 2: Values of $\vec{a}$}\\
	\begin{tabular}{|c|c|c|}
		\hline
		Group & $a_0$ & $a_1$\\
		\hline
		$C_p^3$ & 0 & 0\\
		\hline
		$C_{p^2}\times C_p$ & 0 & 1\\
		\hline
		$H(p^3)$ & 1 & 0\\
		\hline
		$M(p^3)$ & 1 & 1\\
		\hline
	\end{tabular}
\end{center}
Although in these groups, $a_0$ and $a_1$ only take on values of 0 or 1, it will later be convenient to consider more general $a_i\in \bF_p$ (any such choice of $a_1$ and $a_2$ will lead to a different representation of one of these same groups). As such, we will allow $a_1$ and $a_2$ to take on values other than 0 or 1 in our results. This representation will lead us to the desired result on the polynomials defining the extension $K_0/K_0$, but first we need a lemma.

\begin{lemma}
	Let $K$ be a field of characteristic $p$ and $L/K$ a Galois extension of degree $p$ with Galois group $G=\gen{\sigma_1}$. Let $x_1\in L$ be such that $L=K(x_1)$, $(\sigma_1-1)x_1=1$ and $\wp(x_1)\in K$ (call this element $\wp(x_1)=\beta_1$). Let $C_1$ be the polynomial $$C_1(x_1)=\frac{x_1^p+1-(x_1+1)^p}{p}=-\sum_{i=1}^{p-1}\frac{\binom{p}{i}}{p}x_1^i\in\bF_p[x_1].$$ Then $\Tr_{L/K}(C_1(x_1))=(1+\sigma_1+\dots+\sigma_1^{p-1})C_1(x_1)=1$ and $\wp(C_1(x_1))=(\sigma_1-1)D_1(x_1)$, where $$D_1(x_1)=\frac{x_1^p+\beta_1^p-(x_1+\beta_1)^p}{p}=-\sum_{i=1}^{p-1}\frac{\binom{p}{i}}{p}x_1^i\beta_1^{p-i}\in\bF_p[x_1].$$
\end{lemma}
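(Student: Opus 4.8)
The plan is to derive both assertions from polynomial identities over $\bZ$ reduced modulo $p$, using only two facts about the cyclic degree-$p$ extension $L/K$: that $\Tr_{L/K}=1+\sigma_1+\dots+\sigma_1^{p-1}$, and that $\sigma_1$ fixes $K$ (hence $\beta_1$) while sending $x_1$ to $x_1+1$. Since $C_1$ has coefficients in $\bF_p$, the first fact immediately gives $\sigma_1^{\,j}C_1(x_1)=C_1(x_1+j)$, so $\Tr_{L/K}(C_1(x_1))=\sum_{j=0}^{p-1}C_1(x_1+j)$, and the trace claim reduces to the polynomial identity $\sum_{j=0}^{p-1}C_1(t+j)=1$ in $\bF_p[t]$.

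To prove that identity I would introduce $h(t)=\frac{(t+1)^p-t^p-1}{p}$, which lies in $\bZ[t]$ because $p\mid\binom{p}{i}$ for $1\le i\le p-1$, so that $C_1=-h$ as integral polynomials. Clearing denominators and telescoping the sum $\sum_{j=0}^{p-1}\big((t+j+1)^p-(t+j)^p-1\big)$ in $\bZ[t]$ gives $p\sum_{j=0}^{p-1}h(t+j)=(t+p)^p-t^p-p$; expanding $(t+p)^p-t^p$ by the binomial theorem shows every coefficient of $\frac1p\big((t+p)^p-t^p\big)$ is divisible by $p$, so $\sum_{j=0}^{p-1}h(t+j)\equiv -1$ and hence $\sum_{j=0}^{p-1}C_1(t+j)\equiv 1\pmod p$. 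Specializing $t=x_1$ yields $\Tr_{L/K}(C_1(x_1))=1$.

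For the Artin--Schreier statement I would use that any $f\in\bF_p[t]$ satisfies $f(t)^p=f(t^p)$ in characteristic $p$ (Frobenius is trivial on $\bF_p$), so $C_1(x_1)^p=C_1(x_1^p)=C_1(x_1+\beta_1)$ because $x_1^p=x_1+\beta_1$; hence $\wp(C_1(x_1))=C_1(x_1+\beta_1)-C_1(x_1)$. On the other side $(\sigma_1-1)D_1(x_1)=D_1(x_1+1)-D_1(x_1)$, since $\sigma_1$ fixes $\beta_1$ and moves $x_1$ to $x_1+1$, where $D_1(y)=\frac{y^p+\beta_1^p-(y+\beta_1)^p}{p}$. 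It then remains to verify, with $\beta_1$ replaced by an indeterminate $s$, the identity $C_1(t+s)-C_1(t)=D_1(t+1)-D_1(t)$ in $\bZ[t,s]$ (now $D_1(y)=\frac{y^p+s^p-(y+s)^p}{p}$): after clearing the $p$ and cancelling the terms that appear on both sides, each side equals the same explicit polynomial $\frac{(t+s)^p-(t+s+1)^p-t^p+(t+1)^p}{p}$; equivalently, this is the symmetric $2$-cocycle identity for the Witt addition polynomial $D_1(X,Y)=\frac{X^p+Y^p-(X+Y)^p}{p}$, of which both $C_1(y)=D_1(y,1)$ and $D_1(y)=D_1(y,\beta_1)$ are specializations. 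Substituting $t=x_1$, $s=\beta_1$ completes the proof.

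None of these steps is conceptually deep — this is a bookkeeping lemma — but the point needing care throughout is that $C_1$ and $D_1$ make sense only as mod-$p$ reductions of integral polynomials, so the telescoping in the trace computation and the cancellation in the final identity must both be carried out over $\bZ$ (or $\bZ_{(p)}$) and only afterward reduced modulo $p$; that is the one place where an error could creep in.
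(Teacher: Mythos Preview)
Your proposal is correct and follows essentially the same route as the paper: telescope the sum $\sum_{j=0}^{p-1}C_1(t+j)$ over $\bZ[t]$ and reduce mod $p$ for the trace, then use $C_1(x_1)^p=C_1(x_1+\beta_1)$ and verify the two-variable identity $C_1(t+s)-C_1(t)=D_1(t+1)-D_1(t)$ in $\bZ[t,s]$ for the $\wp$ claim. Your framing via the Witt $2$-cocycle $D_1(X,Y)=\frac{X^p+Y^p-(X+Y)^p}{p}$ is a nice conceptual gloss, but the underlying computation is the same one the paper carries out.
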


\begin{proof}
	First, note that $(\sigma_1-1)x_1=1$, i.e. $\sigma_1(x_1)=x_1+1$, implies that $\sigma_1^k(x_1)=x_1+k$ for each $k\in\bN$. Then using the fact that $\sigma_1$ is an automorphism of $L$ which fixes $K$ (and thus $\bF_p$), we have $$\Tr_{L/K}(C_1(x_1))=(1+\sigma_1+\dots+\sigma_1^{p-1})C_1(x_1)=\sum_{j=0}^{p-1}\left(-\sum_{i=1}^{p-1}\frac{\binom{p}{i}}{p}(x_1+j)^i\right).$$
	To get the desired result, it will now be easiest to show a polynomial identity in the ring $\bQ[x]$.
	\begin{align*}
		\sum_{j=0}^{p-1}\left(-\sum_{i=1}^{p-1}\frac{\binom{p}{i}}{p}(x+j)^i\right)&=\sum_{j=0}^{p-1}\frac{(x+j)^p+1-(x+j+1)^p}{p}\\
		&=\frac{x^p+1-(x+1)^p}{p}+\dots+\frac{(x+p-1)^p+1-(x+p)^p}{p}\\
		&=\frac{x^p-(x+p)^p+p}{p}\\
		&=1-\sum_{i=0}^{p-1}\frac{\binom{p}{i}}{p}x^ip^{p-i}.
	\end{align*}
	Note that since the two ends of this string of identities lie in $\bZ[x]$, this identity will still hold in this context. Furthermore, we can reduce modulo $p$ and plug in $x=x_1$ to return to the context of $\bF[x_1]$:
	$$\Tr_{L/K}(C_1(x_1))=\sum_{j=0}^{p-1}\left(-\sum_{i=1}^{p-1}\frac{\binom{p}{i}}{p}(x_1+j)^i\right)=1.$$
	This shows the desired result for the trace; we now turn our attention to $\wp(C_1(x_1)).$ First, recall that $\wp$ is an $\bF_p$-linear map; then 
	\begin{align*}
		\wp(C_1(x_1))&=C_1(x_1^p)-C_1(x_1)\\&=C_1(x_1+\beta_1)-C_1(x_1)
		\\&=-\sum_{i=1}^{p-1}\frac{\binom{p}{i}}{p}(x_1+\beta_1)^i+\sum_{i=1}^{p-1}\frac{\binom{p}{i}}{p}x_1^i.
	\end{align*}
	As before, simplifying this expression will be easier to do after some intermediate steps carried out in $\bQ[x,y]$:
	\begin{align*}
		-\sum_{i=1}^{p-1}\frac{\binom{p}{i}}{p}(x+y)^i+\sum_{i=1}^{p-1}\frac{\binom{p}{i}}{p}x^i&=\frac{(x+y)^p+1-(x+y+1)^p}{p}-\frac{x^p+1-(x+1)^p}{p}\\&=\frac{(x+y)^p-(x+y+1)^p-x^p+(x+1)^p}{p}\\&=\frac{(x+1)^p+y^p-(x+y+1)^p}{p}-\frac{x^p+y^p-(x+y)^p}{p}\\&=-\sum_{i=1}^{p-1}\frac{\binom{p}{i}}{p}(x+1)^{i}y^{p-i}+\sum_{i=1}^{p-1}\frac{\binom{p}{i}}{p}x^iy^{p-i}.
	\end{align*}
	Again, note that both ends of this string of identities lie in $\bZ[x]$, so the identity will still hold true after reducing modulo $p$ and substituting $x=x_1$ and $y=\beta_1$. Then
	\begin{align*}
		\wp(C_1(x_1))&=-\sum_{i=1}^{p-1}\frac{\binom{p}{i}}{p}(x_1+\beta_1)^i+\sum_{i=1}^{p-1}\frac{\binom{p}{i}}{p}x_1^i\\
		&=-\sum_{i=1}^{p-1}\frac{\binom{p}{i}}{p}(x_1+1)^{i}\beta_1^{p-i}+\sum_{i=1}^{p-1}\frac{\binom{p}{i}}{p}x_1^i\beta_1^{p-i}\\
		&=(\sigma_1-1)D_1(x_1).
	\end{align*}
\end{proof}

\begin{proposition}
	Let $K_3/K_0$ be a Galois extension of degree $p^3$ with Galois group $\Gal(K_3/K_0)\cong G_{\vec{a}}$ as in $\textup{(1)}$,
	with $a_0,a_1\in \bF_p$ and $K_2/K_0$ as defined in the introduction. That is, $K_2=K_0(x_1,x_2)$ is the fixed field of $\gen{\sigma_3}\leq G_{\vec{a}}$, $K_1=K_0(x_1)$ is the fixed field of $\gen{\sigma_2,\sigma_3}$, and $(\sigma_i-1)x_i=1$ for $1\leq i\leq 3$, with $\wp(x_1)=\beta_1$ and $\wp(x_2)=\beta_2$ for some $\beta_1,\beta_2\in K_0$. Then we can select $x_3$ such that $K_3=K_0(x_1,x_2,x_3)$, with $x_3^p-x_3=a_0 \beta_2 x_1+a_1 D_1(x_1)+\beta_3$ for some $\beta_3\in K_0$, where $D_1$ is the Witt polynomial from Theorem 1.1. 
	
	Moreover, the converse holds; that is, if $K_3/K_0$ is an algebraic extension of degree $p^3$ with $K_3=K_0(x_1,x_2,x_3)$ and $\wp(x_i)$ as above for $i=1,2,3$, then $K_3/K_0$ is a Galois extension with Galois group $G_{\vec{a}}$.
\end{proposition}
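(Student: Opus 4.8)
The plan is to prove the forward direction by explicitly building $x_3$ inside $K_3$ and then verifying it satisfies an Artin–Schreier equation over $K_0$ of the claimed shape; the converse direction is then an independent computation showing that an extension given by such an equation has the right automorphisms.

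For the forward direction, I would start from the fact that $\Gal(K_3/K_2)=\gen{\sigma_3|_{K_3}}$ is cyclic of degree $p$, so there is \emph{some} $y\in K_3$ with $(\sigma_3-1)y=1$ and $\wp(y)\in K_2$. The issue is that $\wp(y)$ a priori only lies in $K_2=K_0(x_1,x_2)$, not $K_0$, and need not have the stated form; I must replace $y$ by a corrected element $x_3=y+(\text{something in }K_2)$ so that $\wp(x_3)$ has the advertised shape $a_0\beta_2 x_1+a_1 D_1(x_1)+\beta_3$ with $\beta_3\in K_0$. The key input is that $\sigma_1,\sigma_2$ act on $K_3$ and I can compute how $\sigma_1,\sigma_2$ move $x_3$. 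From the relations in $G_{\vec a}$ — specifically $\sigma_2\sigma_1=\sigma_1\sigma_2\sigma_3^{a_0}$ and $\sigma_1^p=\sigma_3^{a_1}$ — I get that $(\sigma_1-1)$ and $(\sigma_2-1)$ applied to $x_3$ are controlled: commutator relations force $(\sigma_2-1)(\sigma_1-1)x_3$ (a suitable ``mixed difference'') to equal $a_0$, and the relation $\sigma_1^p=\sigma_3^{a_1}$ forces the norm-type expression $(1+\sigma_1+\dots+\sigma_1^{p-1})(\sigma_1-1)$-type quantity to produce $a_1$. Concretely I expect $(\sigma_1-1)x_3$ and $(\sigma_2-1)x_3$ to be elements of $K_2$ fixed by $\sigma_3$, and by solving these ``descent'' equations using Lemma 2.1 — which supplies $C_1(x_1)$ with $\Tr_{K_1/K_0}C_1(x_1)=1$ and $\wp(C_1(x_1))=(\sigma_1-1)D_1(x_1)$, and the analogous identity $(\sigma_2-1)(\beta_2 x_1+\text{const})\sim a_0$ via additivity — I can pin down $(\sigma_1-1)x_3=a_0\beta_2 C_1(?)+a_1 D_1'(x_1)+\cdots$ up to $K_0$-ambiguity. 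Adding the right $K_2$-correction term built from $a_0\beta_2$ and $a_1$ times explicit $\bF_p$-polynomials in $x_1,x_2$ makes $\sigma_1,\sigma_2,\sigma_3$ all fix $\wp(x_3)$, so $\wp(x_3)\in K_0$, and tracking the correction shows $\wp(x_3)=a_0\beta_2 x_1+a_1 D_1(x_1)+\beta_3$. The role of Lemma 2.1 is exactly to guarantee these correction terms exist with the stated Witt-polynomial normalization.

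For the converse, suppose $K_3=K_0(x_1,x_2,x_3)$ with $\wp(x_1)=\beta_1$, $\wp(x_2)=\beta_2$, $\wp(x_3)=a_0\beta_2 x_1+a_1 D_1(x_1)+\beta_3$ all as prescribed, and $[K_3:K_0]=p^3$. I would first observe $K_1=K_0(x_1)$, $K_2=K_1(x_2)$ are Artin–Schreier of degree $p$ (using the degree hypothesis to rule out collapse). The automorphism $\sigma_1$ of $K_1/K_0$ sending $x_1\mapsto x_1+1$ must be extended to $K_3$: one checks that the equations for $x_2$ (whose right side is $\beta_2\in K_0$, hence $\sigma_1$-fixed) and for $x_3$ are consistent, in that applying $\sigma_1$ to $\wp(x_3)$ changes the right side by $a_0\beta_2+a_1\wp(D_1(x_1))\cdot$-type increment, which by the second identity of Lemma 2.1 equals $\wp$ of an explicit element of $K_0(x_1,x_2)$; thus $\sigma_1$ extends, with $\sigma_1^p$ acting on $x_3$ by an increment equal to $a_1$ (again via Lemma 2.1's trace identity), so $\sigma_1^p$ is the generator of $\Gal(K_3/K_2)$ raised to $a_1$ — matching $\sigma_1^p=\sigma_3^{a_1}$. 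Similarly $\sigma_2$ extends, $\sigma_2^p=1$, and computing the commutator $\sigma_2\sigma_1\sigma_2^{-1}\sigma_1^{-1}$ on $x_3$ gives exactly $\sigma_3^{a_0}$. Since these generators and relations match the presentation (1) and the group they generate has order at least $p^3=[K_3:K_0]$, the Galois group is exactly $G_{\vec a}$.

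I expect the main obstacle to be the forward direction's bookkeeping: showing the $K_2$-valued quantities $(\sigma_1-1)x_3$ and $(\sigma_2-1)x_3$ are \emph{exactly} the right combinations of $D_1$, $C_1$, $\beta_2$, and explicit polynomials so that a single correction term simultaneously makes all three generators fix $\wp(x_3)$ — i.e. checking the compatibility (cocycle-type) condition that the correction solving the $\sigma_1$-equation also solves the $\sigma_2$-equation. This is where the central-series structure (so that $\sigma_3$ is central and $[G_{\vec a},G_{\vec a}]\subseteq\gen{\sigma_3}$) is essential, and where the precise normalization of $D_1$ in Lemma 2.1 earns its keep.
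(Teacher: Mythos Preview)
Your overall strategy matches the paper's: normalize $x_3$ so that the $\sigma_i$-actions are prescribed, then read off $\wp(x_3)$; for the converse, extend $\sigma_1,\sigma_2$ explicitly and verify the relations. Two points deserve correction, though, and together they dissolve the obstacle you flag at the end.

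First, the paper does \emph{not} treat $(\sigma_1-1)x_3$ and $(\sigma_2-1)x_3$ symmetrically and then worry about a cocycle compatibility. Instead it observes at the outset that $\Gal(K_3/K_1)=\gen{\sigma_2,\sigma_3}\cong C_p^2$, so $x_3$ can be chosen from the start with $(\sigma_2-1)x_3=0$. After that, only the $\sigma_1$-action needs adjusting, and the correcting element $k_1$ is found in $K_1$ (via additive Hilbert 90, once Lemma~2.1 kills the trace), hence is $\sigma_2$-fixed; replacing $x_3$ by $x_3-k_1$ therefore cannot disturb the relation $(\sigma_2-1)x_3=0$. Your ``compatibility (cocycle-type) condition'' simply never arises.

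Second, your tentative formula $(\sigma_1-1)x_3=a_0\beta_2 C_1(?)+a_1 D_1'(x_1)+\cdots$ is garbled. The correct target for the action, obtained exactly as you outline from $(\sigma_2-1)A=a_0$ and $\Tr_{K_1/K_0}(A-a_0x_2)=a_1$, is
\[
(\sigma_1-1)x_3=a_0\,x_2+a_1\,C_1(x_1).
\]
Here $x_2$ (not $\beta_2$) and $C_1$ (not $D_1$) appear. The constants $\beta_2$ and the polynomial $D_1$ enter only after you apply $\wp$: since $\wp(x_2)=\beta_2$ and $\wp(C_1(x_1))=(\sigma_1-1)D_1(x_1)$ by Lemma~2.1, one gets $(\sigma_1-1)\wp(x_3)=(\sigma_1-1)(a_0\beta_2 x_1+a_1 D_1(x_1))$, whence $\wp(x_3)-a_0\beta_2 x_1-a_1 D_1(x_1)\in K_0$. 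Your converse sketch is fine and matches the paper.
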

\begin{proof}
	First, recall that since $\Gal(K_2/K_0)$ is a $C_p^2$ group, we may choose $x_2\in K_2$ such that $(\sigma_1-1)x_2=0$. Also note that $\Gal(K_3/K_1)=\langle\sigma_2,\sigma_3\rangle\cong C_p^2$. Then when considering the extension $K_3/K_1$, we can choose $x_3\in K_3$ such that $(\sigma_2-1)x_3=0$. Then the only unknown part of the group action is $\sigma_1$ on $x_3$. Let $A=(\sigma_1-1)x_3\in K_3$. Since $\sigma_3\in Z(G_{\vec{a}})$, $(\sigma_3-1)A=(\sigma_1-1)(\sigma_3-1)x_3=(\sigma_1-1)1=0$. Thus, $A\in K_2$. Now using a known identity from the group definition, we can derive the following identity:
	\begin{alignat*}{1}
		x_3+\sigma_2(A)&=\sigma_2(x_3+A)\\&=\sigma_2\sigma_1(x_3)\\
		&=\sigma_1\sigma_2\sigma_3^{a_0}(x_3)\\
		&=\sigma_1\sigma_2(x_3+a_0)\\
		&=\sigma_1(x_3+a_0)\\
		&=x_3+A+a_0.
	\end{alignat*}
	Then $(\sigma_2-1)A=a_0$. Therefore, $(\sigma_2-1)(A-a_0 x_2)=0$, so $A-a_0 x_2\in K_1$.
	
	Now $\Tr_{K_1/K_0}(A-a_0 x_2)=(\sigma_1^p-1)x_3=(\sigma_3^{a_1}-1)x_3=a_1$. Then by the lemma, $\Tr_{K_1/K_0}(A-a_0 x_2-a_1 C_1(x_1))=0$. Using the additive version of Hilbert's Theorem 90, we then know that there is some $k_1\in K_1$ such that $(\sigma_1-1)k_1=A-a_0 x_2-a_1 C_1(x_1)$. Then $(\sigma_1-1)(x_3-k_1)=a_0 x_2+a_1 C_1(x_1)$. We can then replace $x_3$ with $x_3-k_1$, which maintains all prior relationships and establishes
	\begin{equation}
	(\sigma_1-1)x_3=a_0 x_2+a_1 C_1(x_1).
	\end{equation} 
	It will also be helpful to use (2) to establish another identity. For this, note that in characteristic $p$, $1+\sigma_1+\dots+\sigma_1^{p-1}=(\sigma_1-1)^{p-1}$.
	\begin{equation}
	\begin{alignedat}{1}
	(1+\sigma_1+\dots+\sigma_1^{p-1})x_3&=(\sigma_1-1)^{p-2}(a_0x_2+a_1C_1(x_1))\\
	&=a_1(\sigma_1-1)^{p-2}C_1(x_1).
	\end{alignedat}
	\end{equation}
	
	Now consider $B=\wp(x_3)\in K_2$. Recall that $\wp$ is an $\bF_p$-linear map and an additive homomorphism which commutes with the elements of $G_{\vec{a}}$; then $(\sigma_2-1)B=\wp((\sigma_2-1)x_3)=0$, so $B\in K_1$. Now note that:
	\begin{alignat*}{1}
		(\sigma_1-1)B&=\wp((\sigma_1-1)x_3)\\
		&=\wp(a_0 x_2+a_1 C_1(x_1))\\
		&=a_0 \beta_2 + a_1(\sigma_1-1)D_1(x_1)\\
		&=(\sigma_1-1)(a_0 \beta_2 x_1+a_1 D_1(x_1)),
	\end{alignat*}
	so $(\sigma_1-1)(B-a_0 \beta_2x_1-a_1 D_1(x_1))=0$. For ease of notation, define $A_1:=\beta_2 x_1$. Then $B-a_0 A_1 - a_1 D_1(x_1)=\beta_3$ for some $\beta_3\in K_0$. Therefore, 
	\begin{equation*}
		B=\wp(x_3)=a_0 A_1+a_1 D_1(x_1)+\beta_3.
	\end{equation*}
	
	Now for the converse, let $K_3/K_0$ be an algebraic extension of degree $p^3$ and $K_3=K_0(x_1,x_2,x_3)$, with $\wp(x_3)=a_0A_1+a_1D_1(x_1)+\beta_3$, $\wp(x_2)=\beta_2$, and $\wp(x_1)=\beta_1$, where $\beta_i\in K_0$, $A_1=\beta_2x_1$, and $a_i\in\{0,1\}$. Define $G=\Gal(K_3/K_0)$ (the group of automorphisms of $K_3$ which fix $K_0$ pointwise), $K_1=K_0(x_1)$ and $K_2=K_1(x_2)$ as used previously, so that $K_3=K_2(x_3)$. Then each $K_{i}/K_{i-1}$ is an extension of degree $p$ for $1\leq i\leq 3$, and the Artin-Schreier equations given are exactly the minimal polynomials for each $x_i$ over $K_{i-1}$. Immediately, we know that $K_2/K_0$ is a Galois extension of order $p^2$ (and in fact, is a $C_p^2$ extension), since $x_1$ and $x_2$ each satisfy an Artin-Schreier polynomial over $K_0$. We also have that $K_3$ is trivially Galois over both $K_2$ and $K_1(x_3)$. Then we need to show that $K_3$ is Galois over $K_0$.
	
	Let $\sigma_3\in \Gal(K_3/K_2)\subseteq G$ be such that $\sigma_3(x_3)=x_3+1$. Then $\sigma_3(x_1)=x_1$, and $\sigma_3(x_2)=x_2$. Also let $\sigma_2\in\Gal(K_3/K_1(x_3))\subseteq G$ such that $\sigma_2(x_2)=x_2+1$. Then $\sigma_2(x_3)=x_3$, and $\sigma_2(x_1)=x_1$. Now let $\sigma_1$ be an embedding of $K_3$ into its algebraic closure which fixes $K_0$ pointwise and for which $\sigma_1(x_1)\neq x_1$. Then $$\sigma_1(x_1)^p-\sigma_1(x_1)-\beta_1=\sigma_1(x_1^p-x_1-\beta_1)=0,$$ so $\sigma_1(x_1)$ must be another root of $x^p-x-\beta_1$; then $\sigma_1(x_1)=x_1+n$ for some $n\in\bF_p$. Without loss of generality, assume $\sigma_1(x_1)=x_1+1$ (if not, some power of $\sigma_1$ works). Furthermore, we know that $\sigma_1(x_2)=x_2+m$ for some $m\in\bF_p$. Without loss of generality, we can assume that $m=0$ (if not, multiply $\sigma_1$ by a power of $\sigma_2$). Finally, note that
	\begin{alignat*}{1}
		\wp(\sigma_1(x_3))&=\sigma_1(\wp(x_3))\\&=\sigma_1(a_0A_1+a_1D_1(x_1)+\beta_3)\\&=a_0A_1+a_0\beta_2+a_1D_1(x_1+1)+\beta_3.
	\end{alignat*}
	That is, $\sigma_1(x_3)$ is a root of the polynomial $x^p-x-(a_0\beta_2(x_1+1)+a_1D_1(x_1+1)+\beta_3)$, a degree $p$ polynomial over $K_1$. The $p$ roots of this polynomial are of the form $x_3+a_0x_2+a_1C_1(x_1)+n$, where $n\in\bF_p$ (using identities shown above). Then $\sigma_1(x_3)$ must be of this form, and therefore lies in $K_3$. Without loss of generality, we can assume that $n=0$ (if not, multiply $\sigma_1$ by the appropriate power of $\sigma_3$). Now note that $\sigma_1(x_1)=x_1+1$, $\sigma_1(x_2)=x_2$, and $\sigma_1(x_3)=x_3+a_0x_2+a_1C_1(x_1)$ (and $\sigma_1$ fixes $K_0$ and respects addition and multiplication), so $\sigma_1$ is an automorphism of $K_3$ which fixes $K_0$, i.e. $\sigma_1\in G$. However, since $\sigma_1$ does not fix $x_1$, $\sigma_1\notin\gen{\sigma_2,\sigma_3}$. Then since $\abs{\gen{\sigma_2,\sigma_3}}=p^2$ and $\abs{G}$ must divide $p^3$, it follows that $\abs{G}=p^3$ and $G=\gen{\sigma_1,\sigma_2,\sigma_3}$; in particular, this means that $K_3$ is Galois over $K_0$. Furthermore, we have
	\begin{alignat*}{1}
		\sigma_1^p(x_1)&=x_1+p\\&=x_1,\\
		\sigma_1^p(x_2)&=x_2,\\
		\sigma_1^p(x_3)&=x_3+a_0(px_2)+a_1(C_1(x_1)+C_1(x_1+1)+\dots+C_1(x_1+p-1))\\&=x_3+a_1.
	\end{alignat*}
	Therefore, $\sigma_1^p=\sigma_3^{a_1}$. Finally, note that $\sigma_1$ commutes with $\sigma_3$ and $\sigma_2\sigma_1=\sigma_1\sigma_2\sigma_3^{a_0}$. Therefore, we have $$G=G_{\vec{a}}=\gen{\sigma_1,\sigma_2,\sigma_3:\sigma_1^p=\sigma_3^{a_1},\sigma_2^p=\sigma_3^p=1,\sigma_3\in Z(G_{\vec{a}}),\sigma_2\sigma_1=\sigma_1\sigma_2\sigma_3^{a_0}}.$$ This completes the proof of the converse.
		
\end{proof}

\section{Extensions of degree $p^4$}

We can now discuss the Artin-Schreier equations for the non-Abelian degree $p^4$ extensions discussed in the introduction. Note that since $K_3/K_0$ is a degree $p^3$ extension, we already know $\wp(x_3)$ (and more trivially, $\wp(x_1)$ and $\wp(x_2)$). We will first focus on the first nine cases. As discussed in the introduction, these Galois groups are all of the form
\begin{equation}
\begin{alignedat}{1}
	G_{\vec{a},\vec{b}}=\langle \sigma_1&,\sigma_2,\sigma_3,\sigma_4:\sigma_1^p=\sigma_3^{a_1}\sigma_4^{b_1},\sigma_2^p=\sigma_4^{b_2},\sigma_3^p=\sigma_4^p=1,\sigma_4\in Z(G_{\vec{a},\vec{b}}),\\ &\sigma_3\sigma_2=\sigma_2\sigma_3,
	\sigma_3\sigma_1=\sigma_1\sigma_3\sigma_4^{b_3},\sigma_2\sigma_1=\sigma_1\sigma_2\sigma_3^{a_0}\sigma_4^{b_0}\rangle,
\end{alignedat}
\end{equation}
where $\vec{a}=(a_0,a_1)$ and $\vec{b}=(b_0,b_1,b_2,b_3)$ take on the values shown in Table 1 above. Recall that $\alpha$ refers to any quadratic non-residue (mod p), and in the case of $p=3$, the only non-residue is $\alpha=2\equiv -1$. This representation leads us to our main proposition:

\begin{theorem}
	Let $K_4/K_0$ be a Galois extension of degree $p^4$ with Galois group $G_{\vec{a},\vec{b}}$ as shown in equation \textup{(4)} above, where $a_0,a_1,b_0,b_1,b_3\in \lbrace 0,1\rbrace$, $b_2\in \lbrace 0,1,\alpha\rbrace$, $\alpha$ is a quadratic non-residue modulo $p$, and all notation is as above. Assume that $K_3/K_0$, $x_1$, $x_2$, and $x_3$ are as in Proposition 2.2. Then we can select $x_4$ such that $K_4=K_3(x_4)$ and
	\begin{equation*}
		x_4^p-x_4=b_0 \beta_2 x_1+b_1 D_1(x_1)+b_2 D_1(x_2) + b_3 \left(a_0 \beta_2\binom{x_1}{2}+\beta_3 x_1\right)+\beta_4,
	\end{equation*}
	where $\beta_4\in K_0$ and $D_1$ is the Witt polynomial from Theorem 1.1.
	
	Moreover, the converse holds; that is, if $K_4/K_0$ is an algebraic extension of degree $p^4$ with $K_4=K_0(x_1,x_2,x_3,x_4)$ and $\wp(x_i)$ as above for $i=1,2,3,4$, then $K_4/K_0$ is a Galois extension with Galois group $G_{\vec{a},\vec{b}}$.
\end{theorem}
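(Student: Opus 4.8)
The plan is to mirror the proof of Proposition 2.2 one level up. For the forward direction, start from $(\sigma_4-1)x_4=1$; since $\sigma_4$ is central, $(\sigma_4-1)(\sigma_i-1)x_4=(\sigma_i-1)(\sigma_4-1)x_4=0$, so each $A_i:=(\sigma_i-1)x_4$ lies in $K_3$ for $i=1,2,3$. I would normalize $x_4$ in three stages, subtracting successively an element of $K_3$, then of $K_2$, then of $K_1$. First, $\Tr_{K_3/K_2}(A_3)=(\sigma_3^p-1)x_4=0$, so the additive form of Hilbert 90 lets me replace $x_4$ by $x_4-k$ with $k\in K_3$ to get $(\sigma_3-1)x_4=0$, and since $\sigma_4$ fixes $K_3$ this keeps $(\sigma_4-1)x_4=1$. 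Evaluating $\sigma_3\sigma_2=\sigma_2\sigma_3$ on $x_4$ then forces $A_2\in K_2$, and $\sigma_2^p=\sigma_4^{b_2}$ gives $\Tr_{K_2/K_1}(A_2)=b_2$; combining with $\Tr_{K_2/K_1}(C_1(x_2))=1$ from Lemma 2.1 and Hilbert 90, I subtract an element of $K_2$ to arrange $(\sigma_2-1)x_4=b_2C_1(x_2)$, checking that this element (living in a field $\sigma_3$ fixes) leaves the previous normalizations intact. Evaluating $\sigma_3\sigma_1=\sigma_1\sigma_3\sigma_4^{b_3}$ and $\sigma_2\sigma_1=\sigma_1\sigma_2\sigma_3^{a_0}\sigma_4^{b_0}$ on $x_4$ then determines $A_1$ modulo $K_1$, and $\sigma_1^p=\sigma_3^{a_1}\sigma_4^{b_1}$ gives a trace condition over $K_1$; here I use $p\mid\binom{p}{2}$ (valid as $p$ is odd) to annihilate $\sum_j\sigma_1^j(x_3)$, and the fact $a_1b_3=0$ for all nine groups (read off Table 1) to discard an $a_1b_3$-term. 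One last subtraction from $K_1$ supplied by Lemma 2.1 and Hilbert 90 produces the canonical actions $(\sigma_1-1)x_4=b_1C_1(x_1)+b_0x_2+b_3x_3$, $(\sigma_2-1)x_4=b_2C_1(x_2)$, $(\sigma_3-1)x_4=0$.

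With these in hand, set $B:=\wp(x_4)$. Since $\wp$ is $\bF_p$-linear and commutes with every $\sigma_i$, I have $(\sigma_i-1)B=\wp((\sigma_i-1)x_4)$; using $\wp(C_1(x_j))=(\sigma_j-1)D_1(x_j)$ from Lemma 2.1, the identity $(\sigma_1-1)\binom{x_1}{2}=x_1$, the formula $\wp(x_3)=a_0\beta_2x_1+a_1D_1(x_1)+\beta_3$ from Proposition 2.2, and once more $a_1b_3=0$, each $(\sigma_i-1)B$ equals $(\sigma_i-1)$ applied to the claimed polynomial $b_0\beta_2x_1+b_1D_1(x_1)+b_2D_1(x_2)+b_3(a_0\beta_2\binom{x_1}{2}+\beta_3x_1)$. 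Hence $B$ minus this polynomial is fixed by all of $G_{\vec a,\vec b}$, so it lies in $K_0$; this is $\beta_4$.

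For the converse, Proposition 2.2 already gives that $K_3/K_0$ is Galois with group $G_{\vec a}$, with automorphisms $\sigma_1,\sigma_2,\sigma_3$ acting as specified, and $K_4=K_3(x_4)$ is Artin-Schreier over $K_3$, giving $\sigma_4$ with $\sigma_4(x_4)=x_4+1$. I would extend each $\sigma_i$ ($i=1,2,3$) to $K_4$: applying $\sigma_i$ to the equation for $\wp(x_4)$ and invoking Lemma 2.1 (and $a_1b_3=0$) shows $\sigma_i(\wp(x_4))-\wp(x_4)=\wp(y_i)$ with $y_3=0$, $y_2=b_2C_1(x_2)$, $y_1=b_1C_1(x_1)+b_0x_2+b_3x_3\in K_4$; as $T^p-T-\wp(x_4)$ splits in $K_4$, the extended map sends $x_4\mapsto x_4+y_i$ after correcting by a power of $\sigma_4$, and is an automorphism of $K_4$. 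The subgroups $\langle\sigma_4\rangle\subsetneq\langle\sigma_3,\sigma_4\rangle\subsetneq\langle\sigma_2,\sigma_3,\sigma_4\rangle\subsetneq\langle\sigma_1,\sigma_2,\sigma_3,\sigma_4\rangle$ are strictly increasing (each $\sigma_i$ moves $x_i$ while the remaining generators fix it) with orders $p,p^2,p^3,p^4$, the upper bounds coming from the relations $\sigma_3^p=\sigma_4^p=1$, $\sigma_2^p=\sigma_4^{b_2}$, $\sigma_1^p=\sigma_3^{a_1}\sigma_4^{b_1}$ which I verify by evaluating on $x_1,\dots,x_4$; hence $\mathrm{Aut}(K_4/K_0)$ has order $p^4=[K_4:K_0]$ and $K_4/K_0$ is Galois with group $\langle\sigma_1,\sigma_2,\sigma_3,\sigma_4\rangle$. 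Checking the remaining defining relations of $G_{\vec a,\vec b}$ is then a direct evaluation on the $x_i$, with $\sigma_1^p=\sigma_3^{a_1}\sigma_4^{b_1}$ again requiring $\Tr_{K_1/K_0}(C_1(x_1))=1$, $p\mid\binom{p}{2}$, and $a_1b_3=0$.

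I expect the main obstacle to be the bookkeeping in the forward normalization: ensuring that none of the three successive subtractions from $x_4$ disturbs a relation already arranged, and correctly extracting the constants $b_0,b_1,b_2,b_3$ from the coupled relations among the $A_i$. The only genuinely delicate point, in both directions, is that an $a_1b_3D_1(x_1)$ term would otherwise survive and cannot be absorbed --- since $\Tr_{K_1/K_0}(D_1(x_1))=\beta_1\neq0$, the element $D_1(x_1)$ is not $(\sigma_1-1)$ of anything in $K_1$ --- so it must vanish identically, which is exactly why the nine groups in Table 1 all satisfy $a_1b_3=0$.
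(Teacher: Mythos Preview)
Your proposal is correct and follows essentially the same route as the paper's proof. The one organizational difference worth noting is that the paper shortcuts your first two normalization steps (arranging $(\sigma_3-1)x_4=0$ and $(\sigma_2-1)x_4=b_2C_1(x_2)$) by observing that $\Gal(K_4/K_1)=\langle\sigma_2,\sigma_3,\sigma_4\rangle$ is itself of the form $G_{\vec a}$ with parameters $(a_0,a_1)=(0,b_2)$, and then simply invoking Proposition~2.2 on the tower $K_1\subset K_2\subset K_3\subset K_4$; you instead carry out those two Hilbert~90 steps by hand. After that, both arguments proceed identically: determine $A=(\sigma_1-1)x_4$ modulo $K_1$ from the commutator relations, use $\sigma_1^p=\sigma_3^{a_1}\sigma_4^{b_1}$ together with $a_1b_3=0$ for the trace condition, normalize once more, and then read off $\wp(x_4)$ by descending through $K_3\supset K_2\supset K_1\supset K_0$. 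Your converse is likewise the same as the paper's. Your closing remark that the $a_1b_3D_1(x_1)$ term is the genuine obstruction (because $D_1(x_1)$ has nonzero $K_1/K_0$-trace and hence lies outside $(\sigma_1-1)K_1$) is exactly the reason the paper singles out $a_1b_3=0$.
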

  
\begin{proof}
	First, since $\Gal(K_3/K_0)$ is as in Proposition 2.2, we have $\wp(x_3)=a_0 A_1+a_1 D_1(x_1)+\beta_3$, with $(\sigma_2-1)x_3=0$ and $(\sigma_1-1)x_3=a_0 x_2+a_1 C_1(x_1)$. Note that $K_4/K_1$ is another extension of degree $p^3$. From the expression of $G_{\vec{a},\vec{b}}$, we can see that the Galois group for $K_4/K_1$ is as shown in equation (1) in Section 2, with $a_0=0$ and $a_1=b_2$ (i.e. the Galois group is either $C_p^3$ or $C_{p^2}\times C_p$, both of which are Abelian). Then by Proposition 2.2, we can choose $x_4\in K_4$ such that $(\sigma_3-1)x_4=0$ and $(\sigma_2-1)x_4=b_2 C_1(x_2)$.
	
	Now all that remains is to determine the action of $\sigma_1$ on $x_4$. To that end, let $A=(\sigma_1-1)x_4\in K_4$. Since $\sigma_4\in Z(G_{\vec{a},\vec{b}})$, $(\sigma_4-1)A=(\sigma_1-1)(\sigma_4-1)x_4=(\sigma_1-1)1=0$. Then $A\in K_3$. Now using a known identity from the group definition, we can derive the following identity:
	\begin{alignat*}{1}
		x_4+\sigma_3(A)&=\sigma_3(x_4+A)\\&=\sigma_3\sigma_1(x_4)\\
		&=\sigma_1\sigma_3\sigma_4^{b_3}(x_4)\\
		&=\sigma_1\sigma_3(x_4+b_3)\\
		&=\sigma_1(x_4+b_3)\\
		&=x_4+A+b_3
	\end{alignat*}
	Then $(\sigma_3-1)A=b_3$. Therefore, $(\sigma_3-1)(A-b_3 x_3)=0$, so $A-b_3 x_3\in K_2$. Now using another identity from the group definition:
	\begin{alignat*}{1}
		x_4+b_2C_1(x_2)+\sigma_2(A)&=\sigma_2(x_4+A)\\&=\sigma_2\sigma_1(x_4)\\
		&=\sigma_1\sigma_2\sigma_3^{a_0}\sigma_4^{b_0}(x_4)\\
		&=\sigma_1\sigma_2\sigma_3^{a_0}(x_4+b_0)\\
		&=\sigma_1\sigma_2(x_4+b_0)\\
		&=\sigma_1(x_4+b_2C_1(x_2)+b_0)\\
		&=x_4+A+b_2C_1(x_2)+b_0
	\end{alignat*}
	Then $(\sigma_2-1)A=b_0$. Since $x_3$ is fixed by $\sigma_2$, $(\sigma_2-1)(A-b_0 x_2-b_3 x_3)=0$, so $A-b_0 x_2-b_3 x_3\in K_1$.
	
	Now using the action of $\sigma_1$ on each element as well as equation (3) in Proposition 2.2, $\Tr_{K_1/K_0}(A-b_0 x_2-b_3 x_3)=(\sigma_1^p-1)x_4-p(b_0x_2)-a_1b_3(\sigma_1-1)^{p-2}C_1(x_1)=(\sigma_3^{a_1}\sigma_4^{b_1}-1)x_4=b_1$. It should be noted that this holds because either $a_1$ or $b_3$ must be 0, as seen in Table 3 (the fact that one of these constants must be 0 can be shown more generally from the group definition, though this is left to the reader). Then using the identity $\Tr_{K_1/K_0}(C_1(x_1))=1$, shown in Lemma 2.1, $\Tr_{K_1/K_0}(A-b_0 x_2-b_1C_1(x_1)-b_3 x_3)=0$. Using the additive version of Hilbert's Theorem 90, we then know that there is some $k_1\in K_1$ such that $(\sigma_1-1)k_1=A-b_0 x_2-b_1C_1(x_1)-b_3 x_3$. Then $(\sigma_1-1)(x_4-k_1)=b_0 x_2+b_1C_1(x_1)+b_3 x_3$. We can now replace $x_4$ with $x_4-k_1$, which maintains all prior relationships and establishes the identity 
	\begin{equation}
	(\sigma_1-1)x_4=b_0 x_2+b_1C_1(x_1)+b_3 x_3.
	\end{equation}
		
	Now consider $B=\wp(x_4)\in K_3$. Recall that $\wp$ is an $\bF_p$-linear map and an additive homomorphism which commutes with the elements of $G_{\vec{a},\vec{b}}$. Then $(\sigma_3-1)B=\wp((\sigma_3-1)x_4)=0$. Then $B\in K_2$. Now note that
	\begin{alignat*}{1}
		(\sigma_2-1)B&=\wp((\sigma_2-1)x_4)\\
		&=\wp(b_2 C_1(x_2))\\
		&=(\sigma_2-1)(b_2 D_1(x_2)),
	\end{alignat*}
	 so $(\sigma_2-1)(B-b_2 D_1(x_2))=0$. Then $B-b_2 D_1(x_2)\in K_1$. Now note the following, recalling that $a_0b_3=0$:
	 \begin{alignat*}{1}
	 	(\sigma_1-1)(B-b_2 D_1(x_2))&=\wp((\sigma_1-1)x_4)\\
	 	&=\wp(b_0 x_2+b_1C_1(x_1)+b_3 x_3)\\
	 	&=b_0 \beta_2 +b_1(\sigma_1-1)D_1(x_1)+b_3(a_0 \beta_2 x_1 +a_1 D_1(x_1) +\beta_3)\\
	 	&=(\sigma_1-1)\left(b_0 \beta_2 x_1+b_1 D_1(x_1)+b_3\left(a_0\beta_2 \binom{x_1}{2}+\beta_3 x_1\right)\right).
	 \end{alignat*}
	  For ease of notation, define $A_1=\beta_2 x_1$, $A_2=\beta_2\binom{x_1}{2}$, and $A_3=\beta_3 x_1$. Then $(\sigma_1-1)(B-b_0 A_1 -b_1 D_1(x_1)-b_2 D_1(x_2)-b_3(a_0 A_2+A_3))=0$. Therefore, $B-b_0 A_1 -b_1 D_1(x_1)-b_2 D_1(x_2)-b_3(a_0 A_2+A_3)=\beta_4$ for some $\beta_4\in K_0$. Then
	  \begin{equation*}
		  B=b_0 A_1 +b_1 D_1(x_1)+b_2 D_1(x_2)+b_3(a_0 A_2+A_3)+\beta_4. 
	  \end{equation*}
	  
	  Now for the converse, let $K_4/K_0$ be an algebraic extension of degree $p^4$ and $K_4=K_0(x_1,x_2,x_3,x_4)$, with $\wp(x_4)=b_0A_1+b_1D_1(x_1)+b_2D_1(x_2)+b_3(a_0A_2+A_3)+\beta_4$, $\wp(x_3)=a_0A_1+a_1D_1(x_1)+\beta_3$, $\wp(x_2)=\beta_2$, $\wp(x_1)=\beta_1$, where $\beta_i\in K_0$, $A_1=\beta_2x_1$, $A_2=\beta_2\binom{x_1}{2}$, $A_3=\beta_3x_1$, $a_i\in\{0,1\}$, $b_i\in\{0,1\}$ for $i\neq 2$, and $b_2\in\{0,1,\alpha\}$, where $\alpha$ is a quadratic non-residue modulo $p$. Define $G=\Gal(K_4/K_0)$ (the group of automorphisms of $K_4$ which fix $K_0$ pointwise), $K_1=K_0(x_1)$, $K_2=K_1(x_2)$, and $K_3=K_2(x_3)$ as used previously, so that $K_4=K_3(x_4)$. Then each $K_{i}/K_{i-1}$ is an extension of degree $p$ for $1\leq i\leq 4$, and the Artin-Schreier equations given are exactly the minimal polynomials for each $x_i$ over $K_{i-1}$. Immediately from the proof of Proposition 2.2, we know that $K_3/K_0$ and $K_4/K_1$ are Galois extensions of order $p^3$. We also have that $K_4$ is trivially Galois over both $K_3$ and $K_2(x_4)$. Then we need to show that $K_4$ is Galois over $K_0$.
	  
	  Let $\sigma_4\in \Gal(K_4/K_3)\subset G$ such that $\sigma_4(x_4)=x_4+1$. Then $\sigma_4$ fixes $x_1$, $x_2$, and $x_3$. Also let $\sigma_3\in \Gal(K_4/K_2(x_4))$ such that $\sigma_3(x_3)=x_3+1$. Then $\sigma_3$ fixes $x_1$, $x_2$, and $x_4$. Using the proof of Proposition 2.2, we also have $\sigma_2\in \Gal(K_4/K_1)$ such that $\sigma_2(x_2)=x_2+1$, $\sigma_2(x_3)=x_3$, and $\sigma_2(x_4)=x_3+b_2C_1(x_2)$.  Now let $\sigma_1$ be an embedding of $K_4$ into its algebraic closure which fixes $K_0$ pointwise and for which $\sigma_1(x_1)\neq x_1$. Then $\sigma_1|_{K_3}$ is an element of $\Gal(K_3/K_0)$ which does not fix $x_1$. Then we can again use the proof of Proposition 2.2 to assume that $\sigma_1(x_1)=x_1+1$, $\sigma_1(x_2)=x_2$, and $\sigma_1(x_3)=x_3+a_0x_2+a_1C_1(x_1)$. Now note the following, recalling that $a_0b_3=0$:
	  \begin{alignat*}{1}
	  	\wp(\sigma_1(x_4))=&\sigma_1(\wp(x_4))\\
	  	=&\sigma_1(b_0A_1+b_1D_1(x_1)+b_2D_1(x_2)+b_3(a_0A_2+A_3)+\beta_4)\\
	  	=&b_0(A_1+\beta_2)+b_1D_1(x_1+1)+b_2D_1(x_2)\\&+b_3\left(a_0\beta_2\binom{x_1+1}{2}+A_3+\beta_3\right)+\beta_4
	  \end{alignat*}
	  That is, $\sigma_1(x_4)$ is a root of the polynomial $x^p-x-\wp(\sigma_1(x_4))$, where $\wp(\sigma_1(x_4))$ is the expression derived above. This is a degree $p$ polynomial over $K_2$, and the $p$ distinct elements $x_4+b_0x_2+b_1C_1(x_1)+b_3x_3+n$, with $n\in\bF_p$, are roots. Then these are the only roots, and so $\sigma_1(x_4)$ is of this form for some $n\in\bF_p$ (and thus lies in $K_4$). Without loss of generality, we will assume that $n=0$ (if not, multiply $\sigma_1$ by the appropriate power of $\sigma_4$). Now note that $\sigma_1(x_1)=x_1+1$, $\sigma_1(x_2)=x_2$, $\sigma_1(x_3)=x_3+a_0x_2+a_1C_1(x_1)$, and $\sigma_1(x_4)=x_4+b_0x_2+b_1C_1(x_1)+b_3x_3$ (and $\sigma_1$ fixes $K_0$ and respects addition and multiplication), so $\sigma_1$ is an automorphism of $K_4$ which fixes $K_0$, i.e. $\sigma_1\in G$. However, since $\sigma_1$ does not fix $x_1$, $\sigma_1\notin\gen{\sigma_2,\sigma_3,\sigma_4}=\Gal(K_4/K_1)$. Then since $\abs{G}>\abs{\Gal(K_4/K_1)}=p^3$ and $\abs{G}$ must divide $p^4$, it follows that $\abs{G}=p^4$ and $G=\gen{\sigma_1,\sigma_2,\sigma_3,\sigma_4}$; in particular, this means that $K_4$ is Galois over $K_0$. Furthermore, we have
	  \begin{alignat*}{1}
	  	\sigma_1^p(x_1)=&x_1,\\
	  	\sigma_1^p(x_2)=&x_2,\\
	  	\sigma_1^p(x_3)=&x_3+a_1,\\
	  	\sigma_1^p(x_4)=&x_4+pb_0x_2+b_1(C_1(x_1)+\dots+C_1(x_1+p-1))\\&+a_1b_3\left((p-1)C_1(x_1)+(p-2)C_1(x_1+1)+\dots+C_1(x_1+p-2)\right)\\
	  	=&x_4+b_1.
	  \end{alignat*}
	  Note that the first three equalities here come directly from the proof of Proposition 2.2. Then $\sigma_1^p=\sigma_3^{a_1}\sigma_4^{b_1}$. Furthermore, it can be quickly verified that $\sigma_4\sigma_1=\sigma_1\sigma_4$, $\sigma_3\sigma_1=\sigma_1\sigma_3\sigma_4^{b_3}$, and $\sigma_2\sigma_1=\sigma_1\sigma_2\sigma_3^{a_0}\sigma_4^{b_0}$. Then using these facts and those we know about $\Gal(K_4/K_1)\subset G$ from Proposition 2.2, we have
	  \begin{alignat*}{1}
	  	G=G_{\vec{a},\vec{b}}=\langle \sigma_1&,\sigma_2,\sigma_3,\sigma_4:\sigma_1^p=\sigma_3^{a_1}\sigma_4^{b_1},\sigma_2^p=\sigma_4^{b_2},\sigma_3^p=\sigma_4^p=1,\sigma_4\in Z(G_{\vec{a},\vec{b}}),\\ &\sigma_3\sigma_2=\sigma_2\sigma_3,
	  	\sigma_3\sigma_1=\sigma_1\sigma_3\sigma_4^{b_3},\sigma_2\sigma_1=\sigma_1\sigma_2\sigma_3^{a_0}\sigma_4^{b_0}\rangle,
	  \end{alignat*}
	  exactly as presented in equation (4) above. This completes the proof of the converse.
	  	  
\end{proof}

We will now consider degree $p^4$ extensions with the final Galois group. To do so, we will first need a lemma similar to Lemma 2.1.

\begin{lemma}
	Let $K$ be a field of characteristic $p$ and $L/K$ a Galois extension of degree $p^2$ with Galois group $G=\gen{\sigma_1}$. Let $x_1,x_3\in L$ be such that $L=K(x_1,x_3)$, $(\sigma_1-1)x_1=1$, $(\sigma_1-1)x_3=C_1(x_1)$, $\wp(x_1)=\beta_1\in K$, and $\wp(x_3)=D_1(x_1)+\beta_3$ for some $\beta_3\in K$, with $C_1,D_1\in \bF_p[x_1]$ as defined in Lemma 2.1. Let $C_2$ be the polynomial $$C_2(x_1,x_3)=\frac{x_1^{p^2}+1-(x_1+1)^{p^2}+p(x_3^p-(x_3+C_1(x_1))^p)}{p^2}\in\bF[x_1,x_3].$$ Then $\Tr_{L/K}(C_2(x_1,x_3))=(1+\sigma_1+\dots+\sigma_1^{p^2-1})C_2(x_1,x_3)=1$ and $\wp(C_2(x_1,x_3))=(\sigma_1-1)D_2(x_1,x_3)$, where $$D_2(x_1,x_3)=\frac{x_1^{p^2}+\beta_1^{p^2}-(x_1+\beta_1)^{p^2}+p\left(x_3^p+\beta_3^p-(x_3+D_1(x_1)+\beta_3)^p\right)}{p^2}\in\bF_p[x_1,x_3].$$
\end{lemma}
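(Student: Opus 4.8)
The plan is to follow the pattern of Lemma 2.1, the extra $p$-power bookkeeping being handled by recording the crucial identities in $\bZ[\dots]$ and reducing modulo $p$ only at the end (one cannot divide by $p$ in characteristic $p$). Preliminaries: from $(\sigma_1-1)x_1=1$ one gets $\sigma_1^k(x_1)=x_1+k$, and then, since $\sigma_1$ fixes $\bF_p$ and $C_1\in\bF_p[x_1]$, an easy induction from $(\sigma_1-1)x_3=C_1(x_1)$ gives $\sigma_1^k(x_3)=x_3+\tau_k(x_1)$, where $\tau_k(x_1):=\sum_{j=0}^{k-1}C_1(x_1+j)$, so that $\tau_0=0$ and $\tau_k(x_1)+C_1(x_1+k)=\tau_{k+1}(x_1)$. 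One also records the Frobenius relations $x_1^p=x_1+\beta_1$, $x_3^p=x_3+D_1(x_1)+\beta_3$, and the fact that, $C_1,C_2,D_1,D_2$ having coefficients in $\bF_p$, raising any of them to the $p$-th power amounts to shifting its arguments by the relevant values of $\wp$. Finally, writing $D_1(A,C):=\frac{A^p+C^p-(A+C)^p}{p}$ for the two-variable form of $D_1$ (so $D_1(x_1,\beta_1)$ is the $D_1$ of Lemma 2.1 and $D_1(A,1)=C_1(A)$), set
$$\widetilde D_2(A,B,C,D):=\frac{A^{p^2}+C^{p^2}-(A+C)^{p^2}+p\bigl(B^p+D^p-(B+D_1(A,C)+D)^p\bigr)}{p^2},$$
which by Witt's integrality theorem lies in $\bZ[A,B,C,D]$ and is the index-$2$ coordinate of the sum $(A,B,0)\oplus(C,D,0)$ in the truncated Witt ring $W_3$ of length three; one checks directly that $C_2(x_1,x_3)=\widetilde D_2(x_1,x_3,1,0)$ and $D_2(x_1,x_3)=\widetilde D_2(x_1,x_3,\beta_1,\beta_3)$.

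For the trace, write $\Tr_{L/K}(C_2(x_1,x_3))=\sum_{k=0}^{p^2-1}\sigma_1^k C_2(x_1,x_3)=\sum_{k=0}^{p^2-1}C_2(x_1+k,x_3+\tau_k(x_1))$. Substituting the defining formula for $C_2$ and using $\tau_k+C_1(x_1+k)=\tau_{k+1}$, the $p^2$-th powers in the first argument and the $p$-th powers in the second both telescope; carried out as an identity in $\bZ[X,Z]$ (with $\tau_k(X)\in\bZ[X]$), this gives
$$\sum_{k=0}^{p^2-1}C_2(X+k,Z+\tau_k(X))=1+\frac{X^{p^2}-(X+p^2)^{p^2}}{p^2}+\frac{Z^p-(Z+\tau_{p^2}(X))^p}{p}.$$
It then remains to see that the two residual fractions lie in $p\,\bZ[X,Z]$: in the first, each $\binom{p^2}{i}$ with $1\le i<p^2$ is divisible by $p$ and is multiplied by $p^{2(p^2-i)}$, so every term of $(X+p^2)^{p^2}-X^{p^2}$ is divisible by $p^3$; in the second, one first notes $\tau_{p^2}(X)=\frac{X^p-(X+p^2)^p}{p}+p\in p\,\bZ[X]$ by the same kind of estimate, so the binomial expansion of $(Z+\tau_{p^2}(X))^p$ shows each of its terms other than $Z^p$ is divisible by $p^2$. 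Reducing modulo $p$ and specializing $X=x_1$, $Z=x_3$ then yields $\Tr_{L/K}(C_2(x_1,x_3))=1$.

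For the $\wp$ identity, the Frobenius relations give $\wp(C_2(x_1,x_3))=C_2(x_1,x_3)^p-C_2(x_1,x_3)=\widetilde D_2(x_1+\beta_1,x_3+D_1(x_1)+\beta_3,1,0)-\widetilde D_2(x_1,x_3,1,0)$, while $(\sigma_1-1)D_2(x_1,x_3)=\widetilde D_2(x_1+1,x_3+C_1(x_1),\beta_1,\beta_3)-\widetilde D_2(x_1,x_3,\beta_1,\beta_3)$. Hence it suffices to prove the polynomial identity in $\bZ[X,Z,U,V]$
$$\widetilde D_2(X+U,Z+D_1(X,U)+V,1,0)-\widetilde D_2(X,Z,1,0)=\widetilde D_2(X+1,Z+D_1(X,1),U,V)-\widetilde D_2(X,Z,U,V),$$
and then reduce modulo $p$ and specialize $X=x_1$, $Z=x_3$, $U=\beta_1$, $V=\beta_3$. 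Using $\widetilde D_2(A,B,C,D)=\bigl[(A,B,0)\oplus(C,D,0)\bigr]_2$, the elementary rule $(a_0,a_1,a_2)\oplus(0,0,c)=(a_0,a_1,a_2+c)$, and commutativity and associativity of Witt addition on $W_3$, both sides reduce to $\bigl[(X,Z,0)\oplus(U,V,0)\oplus(1,0,0)\bigr]_2-\widetilde D_2(X,Z,U,V)-\widetilde D_2(X,Z,1,0)$: on the left, $(X+U,Z+D_1(X,U)+V,0)=\bigl((X,Z,0)\oplus(U,V,0)\bigr)\oplus(0,0,-\widetilde D_2(X,Z,U,V))$, and moving that last summand past $(1,0,0)$ gives the claim; on the right one argues symmetrically, starting from $(X+1,Z+D_1(X,1),0)=\bigl((X,Z,0)\oplus(1,0,0)\bigr)\oplus(0,0,-\widetilde D_2(X,Z,1,0))$. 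This establishes $\wp(C_2(x_1,x_3))=(\sigma_1-1)D_2(x_1,x_3)$.

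The main obstacle is the $\wp$-part polynomial identity above. The trace telescopes transparently, its only real content being the routine $p$-adic valuation estimates on the residual terms. The $\wp$ identity is, morally, just commutativity and associativity of Witt addition on $W_3$, but turning that into a clean argument requires care over two points: verifying the integrality that legitimizes $\widetilde D_2$ (and hence the whole manipulation) over $\bZ$, and correctly tracking the third-coordinate shifts $(0,0,c)$ when the triple sum is regrouped. A reader preferring to avoid Witt-vector formalism can instead check the displayed identity by expanding both sides and comparing modulo $p^2$, or, more cleanly, by verifying it on ghost components, where it is linear and immediate, and invoking injectivity of the ghost map over the torsion-free ring $\bZ[X,Z,U,V]$.
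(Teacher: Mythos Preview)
Your proposal is correct. The trace computation is essentially identical to the paper's: both telescope the sum over $\sigma_1^k$, obtain the same residual expression $1+\frac{X^{p^2}-(X+p^2)^{p^2}}{p^2}+\frac{Z^p-(Z+\tau_{p^2}(X))^p}{p}$, and dispose of the two fractions by the same $p$-adic valuation estimates.

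For the $\wp$ identity the two arguments genuinely diverge. The paper simply substitutes $x_1^p=x_1+\beta_1$ and $x_3^p=x_3+D_1(x_1)+\beta_3$ into $C_2(x_1^p,x_3^p)-C_2(x_1,x_3)$, rearranges the resulting combination of $p^2$-th and $p$-th powers into three fractions, and then asserts ``by inspection'' that this matches $D_2(x_1+1,x_3+C_1(x_1))-D_2(x_1,x_3)$. Your route is more structural: you recognize $C_2$ and $D_2$ as specializations of the length-three Witt sum polynomial $\widetilde D_2(A,B,C,D)=[(A,B,0)\oplus(C,D,0)]_2$, and then the identity $\wp(C_2)=(\sigma_1-1)D_2$ becomes an instance of commutativity and associativity of Witt addition, with the $(0,0,c)$ shifts carefully tracked. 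What this buys you is an actual explanation of \emph{why} the identity holds (it is the cocycle relation for the Witt-vector carry), at the cost of invoking the Witt formalism and its integrality; the paper's approach avoids that machinery but leaves the final equality as an unverified claim the reader must check by hand. Your closing remark that one can alternatively verify the displayed four-variable identity via ghost components is a nice middle ground.
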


\begin{proof}
	This proof will follow much like that of Lemma 2.1. As before, some steps will actually require using polynomial identities in polynomial rings over $\bQ$. For details on how such methods work, see Lemma 2.1; here, a simplified version that slightly abuses notation will be presented to avoid overly complicated expressions.
	
	First, note that as before, $\sigma_1^k(x_1)=x_1+k$ for every $k\in\bN$. Furthermore, since $\sigma_1$ is an automorphism of $L$ which fixes $K$ and $\sigma_1(x_3)=x_3+C_1(x_1)$, it is straightforward to see that $\sigma_1^k(x_3)=x_3+\sum_{i=0}^{k-1}C_1(x_1+i)$ for $k\in\bN$. Then
	\begin{align*}
		\Tr_{L/K}(C_2(x_1,x_3))&=\sum_{i=0}^{p^2-1}\frac{(x_1+i)^{p^2}+1-(x_1+i+1)^{p^2}}{p^2}\\&\quad+\sum_{i=0}^{p^2-1}\frac{p((x_3+\sum_{j=0}^{i-1}C_1(x_1+j))^p-(x_3+\sum_{j=0}^{i}C_1(x_1+j))^p)}{p^2}\\
		&=1+\frac{x_1^{p^2}-(x_1+p^2)^{p^2}}{p^2}+\frac{x_3^p-(x_3+\sum_{j=0}^{p^2-1}C_1(x_1+j))^p}{p}
	\end{align*}
	Note that $x_1^{p^2}-(x_1+p^2)^{p^2-1}=-\sum_{i=0}^{p^2}\binom{p^2-1}{i}x_1^ip^{2(p^2-i)}$. Note that aside from when $i=p^2-1$, each of these summands has at least a factor of $p^3$. Since $\binom{p^2}{p^2-1}=p^2$, then in fact every summand has a factor of at least $p^3$. Then since $L$ is a field of characteristic $p$, the first fraction in the above expression is equal to 0. For the second fraction, it will help to use the definition of $C_1$:
	\begin{align*}
		\sum_{j=0}^{p^2-1}C_1(x_1+j)&=\sum_{j=0}^{p^2-1}\frac{(x_1+j)^p+1-(x_1+j+1)^p}{p}\\&=\frac{x_1^p+p^2-(x_1+p^2)^p}{p}\\&=p-\sum_{i=0}^{p-1}\frac{\binom{p}{i}}{p}x_1^ip^{2(p-i)}\\
		&=pz.
	\end{align*}
	Here, we simply note that the expression is divisible by $p$, then denote by $z$ the result after factoring out $p$. Then the second fraction in the above expression for $\Tr_{L/K}(x_1,x_3)$ becomes $$\frac{x_3^p-(x_3+pz)^p}{p}=\frac{x_3^p-x_3^p-\sum_{i=0}^{p-1}\binom{p}{i}x_3^i(pz)^{p-i}}{p}\equiv 0\modulo{p}.$$ Then $\Tr_{L/K}(C_1(x_1,x_3))=1$.
	
	We must now show that $\wp(C_2(x_1,x_3))=(\sigma_1-1)D_2(x_1,x_3)$. To do so, it will help to keep in mind that $\wp$ is an $\bF_p$ linear map. Since $C_2(x_1,x_3)\in\bF_p[x_1,x_3]$, we know that $\wp(C_2(x_1,x_3))=C_2(x_1^p,x_3^p)-C_2(x_1,x_3)$. Then:
	\begin{align*}
		\wp(C_2(x_1,x_3))&=-C_2(x_1,x_3)+\frac{(x_1+\beta_1)^{p^2}+1-(x_1+\beta_1+1)^{p^2}}{p^2}\\&\quad+\frac{p((x_3+D_1(x_1)+\beta_3)^p-(x_3+C_1(x_1)+D_1(x_1+1)+\beta_3)^p)}{p^2}\\
		&=\frac{(x_1+\beta_1)^{p^2}-x_1^{p^2}-(x_1+\beta_1+1)^{p^2}+(x_1+1)^{p^2}}{p^2}\\
		&\quad+\frac{p((x_3+D_1(x_1)+\beta_3)^p-x_3^p)}{p^2}\\&\quad+\frac{p(-(x_3+C_1(x_1)+D_1(x_1+1)+\beta_3)^p+(x_3+C_1(x_1))^p)}{p^2}
	\end{align*}
	Finally, one will note that $(\sigma_1-1)D(x_1,x_3)=D(x_1+1,x_3+C_1(x_1))-D(x_1,x_3)$. Inspection will show that this is exactly the expression obtained here for $\wp(C_2(x_1,x_3))$.
\end{proof}

\begin{theorem}
	Let $K_4/K_0$ be a Galois extension of degree $p^4$ with Galois group of the form
	\begin{equation*}
		H=\langle \sigma_1,\sigma_2: \sigma_1^{p^3}=\sigma_2^p=1, \sigma_2\sigma_1=\sigma_1^{1+p^2}\sigma_2\rangle,
	\end{equation*}
	and all notation as above. Assume that $K_3/K_0$, $x_1$, $x_2$, and $x_4$ are as in Proposition 2.2. Then we can select $x_4$ such that $K_4=K_3(x_4)$, and $$x_4^p-x_4=\beta_2 x_1+D_2(x_1,x_3)+\beta_4,$$ with $\beta_4\in K_0$ and $D_2$ is the Witt polynomial from Theorem 1.1.
	
	Moreover, the converse holds; that is, if $K_4/K_0$ is an algebraic extension of degree $p^4$ with $K_4=K_0(x_1,x_2,x_3,x_4)$ and $\wp(x_i)$ as above for $i=1,2,3,4$, then $K_4/K_0$ is a Galois extension with Galois group $H$.
\end{theorem}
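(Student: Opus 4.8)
The plan is to follow Proposition 2.2 and Theorem 3.1, now with Lemma 3.2 in the role that Lemma 2.1 played there. Recall that for $H$ we have $\sigma_3=\sigma_1^p$, $\sigma_4=\sigma_1^{p^2}$, and $\Gal(K_3/K_0)\cong G_{\vec a}$ with $\vec a=(0,1)$, so Proposition 2.2 supplies $x_1,x_2,x_3$ with $(\sigma_1-1)x_1=1$, $(\sigma_1-1)x_2=0$, $(\sigma_2-1)x_2=1$, $(\sigma_2-1)x_3=0$, $(\sigma_1-1)x_3=C_1(x_1)$, and $\wp(x_3)=D_1(x_1)+\beta_3$. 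First I would observe that $\Gal(K_4/K_1)=\gen{\sigma_2,\sigma_3}$ is abelian, in fact $\cong C_{p^2}\times C_p$, since $\sigma_2$ commutes with $\sigma_3=\sigma_1^p$ while $\sigma_2\notin\gen{\sigma_1}$. The subgroup $\gen{\sigma_2,\sigma_4}$ is normal in $H$, so the field $K_0(x_1,x_3)$ it fixes is cyclic of degree $p^2$ over $K_0$ with group generated by $\sigma_1$, and $K_4^{\gen{\sigma_2}}$ is a cyclic degree-$p$ extension of $K_0(x_1,x_3)$ with group $\gen{\sigma_4}$. Hence, exactly as in Proposition 2.2, we may choose $x_4\in K_4^{\gen{\sigma_2}}$ with $K_4=K_3(x_4)$, $(\sigma_4-1)x_4=1$, and $(\sigma_2-1)x_4=0$.

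Next I would pin down $(\sigma_1-1)x_4$ and then $\wp(x_4)$. Put $A=(\sigma_1-1)x_4$; since $\sigma_4$ is central and $(\sigma_4-1)x_4=1\in K_0$, we get $(\sigma_4-1)A=0$, so $A\in K_3$. Applying the relation $\sigma_2\sigma_1=\sigma_1\sigma_2\sigma_4$ to $x_4$ and using $(\sigma_2-1)x_4=0$ gives $(\sigma_2-1)A=1$, hence $A-x_2$ is fixed by $\sigma_2$ and lies in $K_0(x_1,x_3)$. On $K_0(x_1,x_3)/K_0$ the data $(\sigma_1-1)x_1=1$, $(\sigma_1-1)x_3=C_1(x_1)$, $\wp(x_1)=\beta_1$, $\wp(x_3)=D_1(x_1)+\beta_3$ is precisely that of Lemma 3.2, and a direct computation gives $\Tr_{K_0(x_1,x_3)/K_0}(A-x_2)=(\sigma_4-1)x_4=1=\Tr_{K_0(x_1,x_3)/K_0}(C_2(x_1,x_3))$. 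Thus $A-x_2-C_2(x_1,x_3)$ has trace $0$, so the additive Hilbert 90 gives $k\in K_0(x_1,x_3)$ with $(\sigma_1-1)k=A-x_2-C_2(x_1,x_3)$; replacing $x_4$ by $x_4-k$ (which keeps $(\sigma_4-1)x_4=1$ and $(\sigma_2-1)x_4=0$, since $\sigma_2$ and $\sigma_4$ fix $k$) yields $(\sigma_1-1)x_4=x_2+C_2(x_1,x_3)$. Now set $B=\wp(x_4)$. Then $(\sigma_4-1)B=\wp(1)=0$, so $B\in K_3$; by additivity of $\wp$ and Lemma 3.2, $(\sigma_1-1)B=\wp(x_2)+\wp(C_2(x_1,x_3))=\beta_2+(\sigma_1-1)D_2(x_1,x_3)$, so $B-\beta_2x_1-D_2(x_1,x_3)$ is fixed by $\sigma_1$; since also $(\sigma_2-1)B=\wp(0)=0$ and $\sigma_2$ fixes $x_1$ and $x_3$, it is fixed by $\sigma_2$ too, hence by all of $H$, so it lies in $K_0$. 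Writing it as $\beta_4$ gives $\wp(x_4)=\beta_2x_1+D_2(x_1,x_3)+\beta_4$.

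For the converse I would argue as in the converse of Theorem 3.1. With $K_1=K_0(x_1)$, $K_2=K_1(x_2)$, $K_3=K_2(x_3)$, $K_4=K_3(x_4)$ --- each step of degree $p$, the given equations being the minimal polynomials --- Proposition 2.2 makes $K_3/K_0$ Galois with group $C_{p^2}\times C_p$ acting as above. Choose $\sigma_4\in\Gal(K_4/K_3)$ with $\sigma_4(x_4)=x_4+1$. Since $\sigma_2$ fixes $\wp(x_4)$, the element $\sigma_2(x_4)$ is again a root of $X^p-X-\wp(x_4)$, so after multiplying by a power of $\sigma_4$ we may extend $\sigma_2$ to $K_4$ with $\sigma_2(x_4)=x_4$. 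For $\sigma_1$, the polynomial identity $D_2(x_1+1,x_3+C_1(x_1))-D_2(x_1,x_3)=\wp(C_2(x_1,x_3))$ from the proof of Lemma 3.2, together with $\wp(x_2)=\beta_2$, gives $\sigma_1(\wp(x_4))=\wp(x_4+x_2+C_2(x_1,x_3))$, whence $\sigma_1(x_4)=x_4+x_2+C_2(x_1,x_3)+n\in K_4$ for some $n\in\bF_p$; after multiplying $\sigma_1$ by a power of $\sigma_4$ we take $n=0$. Then $\gen{\sigma_1,\sigma_2}\subseteq\Gal(K_4/K_0)$, and as $\sigma_2\notin\gen{\sigma_1}$ while $\sigma_1$ moves $x_1$, this subgroup properly contains $\gen{\sigma_1}$ and has $p$-power order at most $p^4$, hence equal to $p^4=[K_4:K_0]$; so $K_4/K_0$ is Galois with group $\gen{\sigma_1,\sigma_2}$. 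Checking on $x_1,x_2,x_3,x_4$ --- using $\Tr_{K_0(x_1,x_3)/K_0}(C_2(x_1,x_3))=1$ for the action on $x_4$ --- gives $\sigma_1^{p^3}=\sigma_2^p=1$ and $\sigma_2\sigma_1\sigma_2^{-1}=\sigma_1^{1+p^2}$, so $\Gal(K_4/K_0)\cong H$.

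The step I expect to be the main obstacle is arranging the normalization of $x_4$ that makes the $\beta_2x_1$ term appear: one must both choose $x_4$ with $(\sigma_2-1)x_4=0$ (for which one needs $\Gal(K_4/K_1)$ abelian) and notice that the commutator relation $\sigma_2\sigma_1=\sigma_1\sigma_2\sigma_4$ forces $A-x_2$ into the cyclic degree-$p^2$ field $K_0(x_1,x_3)$ --- exactly where Lemma 3.2 applies and where $C_2(x_1,x_3)$ enters $(\sigma_1-1)x_4$. One must also keep in mind that for $H$, unlike for the groups $G_{\vec a,\vec b}$, the elements $\sigma_3,\sigma_4$ are genuine powers of $\sigma_1$, so $\sigma_1$ has order $p^3$ and the degree-$p^2$ Witt machinery of Lemma 3.2, not Lemma 2.1 alone, is what is required.
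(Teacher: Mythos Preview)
Your proof is correct and follows essentially the same route as the paper: introduce $J_2=K_0(x_1,x_3)$ as the fixed field of $\gen{\sigma_2,\sigma_4}\cong C_p^2$, choose $x_4$ with $(\sigma_2-1)x_4=0$, use the commutator relation to land $A-x_2$ in $J_2$, then apply Lemma~3.2 and Hilbert~90 exactly as you do. Your converse is in fact more fully spelled out than the paper's, which only remarks that it goes through as in Theorem~3.1.
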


\begin{proof}
	As before, define $\sigma_3=\sigma_1^p$ and $\sigma_4=\sigma_1^{p^2}$. Then note that $H/\langle\sigma_4\rangle\cong C_{p^2}\times{C_p}$. Then by Proposition 2.2, we can choose $x_3\in K_3$ such that $\wp(x_3)=D_1(x_1)+\beta_3$, with $(\sigma_2-1)x_3=0$ and $(\sigma_1-1)x_3=C_1(x_1)$ (where $C_1$ and $D_1$ are as defined in Lemma 2.1). Define $J_2=K_1(x_3)$ and note that $J_2$ is the fixed field of $\langle \sigma_1^{p^2},\sigma_2\rangle$. Then since $\langle \sigma_1^{p^2},\sigma_2\rangle\cong C_p^2$, we can choose $x_4\in K_4$ such that $(\sigma_2-1)x_4=0$.
	
	Now all that remains is to determine the action of $\sigma_1$ on $x_4$. To that end, let $A=(\sigma_1-1)x_4\in K_4$. Since $\sigma_4\in Z(H)$, $(\sigma_4-1)A=(\sigma_1-1)(\sigma_4-1)x_4=(\sigma_1-1)1=0$. Then $A\in K_3$. Now using a known identity from the group definition, we derive:
	\begin{alignat*}{1}
		x_4+\sigma_2(A)&=\sigma_2(x_4+A)\\&=\sigma_2\sigma_1(x_4)\\
		&=\sigma_1^{1+p^2}\sigma_2(x_4)\\
		&=\sigma_1^{1+p^2}(x_4)\\
		&=\sigma_1(x_4+1)\\
		&=x_4+A+1
	\end{alignat*} 
	Then $(\sigma_2-1)A=1$. Therefore, $(\sigma_2-1)(A-x_2)=0$, so $A-x_2\in J_2$.
	
	Now $\Tr_{J_2/K_0}(A-x_2)=(\sigma_1^{p^2}-1)x_4=(\sigma_4-1)x_4=1$. Then by the lemma, $\Tr_{J_2/K_0}(A-x_2-C_2(x_1,x_3)))=0$. Using the additive version of Hilbert's Theorem 90, we then know that there is some $j_2\in J_2$ such that $(\sigma_1-1)j_2= A-x_2-C_2(x_1,x_3)$. Then $(\sigma_1-1)(x_4-j_2)=x_2+C_2(x_1,x_3)$. Without loss of generality, we can now replace $x_4$ with $x_4-j_2$, which maintains all prior relationships and establishes the identity $(\sigma_1-1)x_4=x_2+C_2(x_1,x_3)$. This also gives us that $(\sigma_3-1)x_4=(\sigma_1^p-1)x_4=\Tr_{K_1/K_0}A=C_1(x_3)$.
	
	Now consider $B=\wp(x_4)\in K_3$. Since $\wp$ commutes with the elements of $H$, $(\sigma_2-1)B=\wp((\sigma_2-1)x_4)=0$. Then $B\in J_2$. Now note that 
	\begin{alignat*}{1}
		(\sigma_1-1)B&=\wp(x_2+C_2(x_1,x_3))\\
		&=\beta_2+(\sigma_1-1)D_2(x_1,x_3)\\
		&=(\sigma_1-1)(\beta_2 x_1+D_2(x_1,x_3)),
	\end{alignat*}
	so $(\sigma_1-1)(B-A_1-D_2(x_1,x_3))=0$, with $A_1=\beta_2 x_1$ as before. Therefore, $B-A_1-D_2(x_1,x_3)=\beta_4$ for some $\beta_4\in K_0$. Then $B=A_1+D_2(x_1,x_3)+\beta_4$.
	
	The converse holds as well, though this part will not be shown in detail. However, it follows in much the same way as in the proof of Theorem 3.1.
	
\end{proof}

As a final note, one will note that the result stated in Theorem 1.1 simply follows by combining Proposition 2.2 and Theorems 3.1 and 3.3.

\begin{acknowledgments}
	This work was partially supported by a NASA Nebraska Space Grant Fellowship. I would also like to thank my undergraduate advisor, Dr. Griff Elder, for his mentorship, support, and guidance in this project.
\end{acknowledgments}

\newpage
\bibliographystyle{amsalpha}

\bibliography{bib}
\end{document}